\documentclass[10pt,oneside,reqno]{amsart}

\usepackage[a4paper,margin=1in]{geometry}

\pdfoutput=1

\usepackage{amsmath}
\usepackage{amssymb}
\usepackage{amsthm}
\usepackage{mathtools}
\usepackage{enumitem}
\usepackage{tikz-cd}

\usepackage[
backend=biber,
style=alphabetic,
citestyle=alphabetic,
giveninits=true,
doi=false,
isbn=false,
url=false,
maxnames=50,
maxalphanames=7
]{biblatex}

\DeclareNameAlias{default}{given-family}
\DeclareDelimFormat[bib]{nametitledelim}{\addcomma\space}
\DeclareDelimFormat{finalnamedelim}{\addspace\bibstring{and}\space}
\DeclareFieldFormat[article,incollection,unpublished]{title}{\mkbibemph{#1}}
\DeclareFieldFormat{journaltitle}{#1\isdot}
\DeclareFieldFormat[article]{volume}{\mkbibbold{#1}}
\DeclareFieldFormat[article]{number}{\mkbibparens{#1}}
\DeclareFieldFormat[article]{pages}{#1}
\renewbibmacro*{in:}{\ifentrytype{article}{}{\printtext{\bibstring{in}\intitlepunct}}}
\renewbibmacro*{volume+number+eid}{\printfield{volume}\setunit*{}\printfield{number}\setunit{\bibeidpunct}\printfield{eid}}
\renewbibmacro*{note+pages}{\printfield{pages}\setunit{\addperiod\space}\printfield{note}\newunit}

\usepackage[linktocpage=true]{hyperref}

\mathtoolsset{showonlyrefs,showmanualtags}

\allowdisplaybreaks

\newtheorem{theorem}{Theorem}
\newtheorem{proposition}[theorem]{Proposition}
\newtheorem{corollary}[theorem]{Corollary}
\newtheorem{lemma}[theorem]{Lemma}

\theoremstyle{definition}
\newtheorem{definition}[theorem]{Definition}

\numberwithin{theorem}{section}
\numberwithin{equation}{section}

\title{Trace radicals and cocenters of free products}

\date{}

\author{Nikita Safonkin}

\address{Institute of Mathematics, Leipzig University, Augustusplatz 10, 04109 Leipzig, Germany.}

\email{safonkin.nik@gmail.com}

\begin{document}

\begin{abstract}
    We call a unital associative algebra \(A\) trace residually finite-dimensional if its elements are separated by finite-dimensional representations and its cocenter \(A/[A,A]\) is separated by the corresponding trace functionals.
    For RFD algebras \(A\) and \(B\), we prove that the trace radical of \(A*B\), the common kernel of all finite-dimensional trace functionals, is the direct sum of the trace radicals of \(A\) and \(B\), which implies that \(A*B\) is trace RFD if and only if both \(A\) and \(B\) are trace RFD.
    The proof combines an explicit cocenter decomposition with a construction of finite-dimensional representations whose traces detect nonzero classes of words of length greater than one.
\end{abstract}

\maketitle

\section{Introduction}

Fix a field \(\Bbbk\) of characteristic zero. Throughout, \emph{algebra} means a nonzero associative unital \(\Bbbk\)-algebra, and a \emph{finite-dimensional representation} of an algebra \(A\) is a unital homomorphism \(\rho\colon A\to\operatorname{End}_{\Bbbk}(V)\) with a nonzero finite-dimensional \(V\).
An algebra \(A\) is \emph{residually finite-dimensional} (RFD) if for every nonzero \(a\in A\) there is a finite-dimensional representation \(\rho\) such that \(\rho(a)\neq0\). Equivalently, every nonzero element of \(A\) survives under a homomorphism from \(A\) to a finite-dimensional \(\Bbbk\)-algebra.

Write \(A_{\natural}:=A/[A,A]\) for the \emph{cocenter} of \(A\), where \([A,A]\) is the \(\Bbbk\)-linear span of the commutators \(ab-ba\) with \(a,b\in A\), and let \(\overline{a}\in A_{\natural}\) denote the class of \(a\).
Since the matrix trace vanishes on commutators, every finite-dimensional representation \(\rho\) of \(A\) induces a linear functional
\begin{equation}
    \operatorname{tr}(-)(\rho)\colon A_{\natural}\to\Bbbk,\qquad \operatorname{tr}(\overline{a})(\rho):=\operatorname{tr}(\rho(a)).
\end{equation}

\begin{definition}\label{def_trace_rfd}
    An algebra \(A\) is \emph{trace residually finite-dimensional} (trace-RFD) if it is RFD and, in addition, the trace functionals separate the elements of \(A_{\natural}\): for every nonzero \(\overline{a}\in A_{\natural}\) there is a finite-dimensional representation \(\rho\) with \(\operatorname{tr}(\overline{a})(\rho)\neq0\).
\end{definition}

Trace separation alone does not imply RFD in general. For a free product \(A*B\), however, trace separation implies RFD whenever neither factor is the base field; see Corollary~\ref{cor_trace_separation_upgrade} below.

For free associative algebras, the tracial Nullstellensatz of Klep and \v{S}penko \cite[Theorem~3.1]{KlepSpenko2014} implies that, over an algebraically closed field of characteristic zero, traces separate the cocenter.
Since free associative algebras are RFD, they are therefore trace-RFD.

The trace-separation condition also has a standard representation-theoretic formulation.
Let \(A\text{-}\mathrm{mod}_{\mathrm{fd}}\) be the category of finite-dimensional left \(A\)-modules, let \(K_0\bigl(A\text{-}\mathrm{mod}_{\mathrm{fd}}\bigr)\) be its Grothendieck group, and set
\(
    R(A):=
    K_0\bigl(A\text{-}\mathrm{mod}_{\mathrm{fd}}\bigr)
    \otimes_{\mathbb{Z}}\Bbbk,
\)
with dual vector space \(R(A)^*\).
For \(a\in A\) and \(M\in A\text{-}\mathrm{mod}_{\mathrm{fd}}\), write \(a_M\in\operatorname{End}_{\Bbbk}(M)\) for the action of \(a\) on \(M\).
There is a natural map
\[
    A_{\natural}\longrightarrow R(A)^*,
    \qquad
    \overline{a}\longmapsto
    \bigl([M]\longmapsto\operatorname{tr}(a_M)\bigr)
\]
and trace separation means precisely that this map is injective.
For \(A=\Bbbk[G]\), where \(G\) is a finite group and \(\Bbbk\) is algebraically closed, it is given by the ordinary character table and is an isomorphism.
Analogous maps have been studied for Hecke algebras in \cite{Kazhdan1986Cuspidal},\cite{BernsteinDeligneKazhdan1986}, \cite[Theorem~B]{Kazhdan1986CloseFields}, \cite{CiubotaruHe2016},\cite{CiubotaruHe2017}, \cite{HeKim2019}.

The trace-RFD condition has a noncommutative-geometric interpretation through the Kontsevich--Rosenberg principle \cite{KontsevichRosenberg2000}.
This principle proposes that a ''geometric'' structure on an associative algebra \(A\) should induce the corresponding ''commutative'' structure on the representation schemes \(\operatorname{Rep}_N(A)\) parametrizing \(N\)-dimensional representations of \(A\), for every \(N\geq1\).
Ginzburg and Schedler recast this principle in terms of a commutative algebra \(\mathcal{F}(A):=\operatorname{Sym}_{\mathbb{S}}\bigl(A_{\natural}\oplus A[-1]\bigr)\) in the symmetric monoidal category of diagonal \(\mathbb{S}\)-bimodules \cite{GinzburgSchedler2010}.
Interpreting \(\mathcal{F}(A)\) as the algebra of functions on the ``noncommutative scheme associated with \(A\)'', we should then formulate the property of being reduced as the vanishing of a certain ideal in \(\mathcal{F}(A)\) \cite{Safonkin2025DoubleStarProduct}.
This ideal vanishes precisely when \(A\) is trace-RFD.

Recall that the cocenter of an algebra naturally identifies with its zeroth cyclic homology.
This identification allows one to describe the cocenter of the free product of two augmented algebras by specializing a theorem of Feigin and Tsygan \cite[Theorem~3.2.1]{FeiginTsygan1987} about the cyclic homology of the free product to degree zero; Ginzburg and Schedler obtain a related description for \(B*\Bbbk[t]\) in \cite[Section~3.1]{GinzburgSchedler2012}.
For arbitrary unital algebras, which need not admit augmentations, Proposition~\ref{prop_free_product_cocenter_decomposition} below provides an analogous description.

Define the \emph{trace radical} of a \(\Bbbk\)-algebra \(C\) by
\[
    \operatorname{Rad}_{\mathrm{tr}}(C):=
    \bigcap_{\rho}
    \ker\bigl(\operatorname{tr}(-)(\rho)\bigr)
    \subseteq C_{\natural},
\]
where \(\rho\) ranges over the finite-dimensional representations of \(C\).

Based on the description of the cocenter, our main result gives an exact formula for the trace radical of a free product.

\begin{theorem}\label{thm_trace_radical_free_product}
    Let \(A\) and \(B\) be RFD \(\Bbbk\)-algebras, and let \(\iota_A\colon A_{\natural}\to(A*B)_{\natural}\) and \(\iota_B\colon B_{\natural}\to(A*B)_{\natural}\) be the maps induced by the canonical homomorphisms. Then \(\iota_A\) and \(\iota_B\) are injective, and
    \[
        \operatorname{Rad}_{\mathrm{tr}}(A*B)
        =\iota_A\bigl(\operatorname{Rad}_{\mathrm{tr}}(A)\bigr)
        \oplus\iota_B\bigl(\operatorname{Rad}_{\mathrm{tr}}(B)\bigr).
    \]
\end{theorem}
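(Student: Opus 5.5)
We will use the cocenter decomposition of Proposition~\ref{prop_free_product_cocenter_decomposition}. Choose vector space complements \(A=\Bbbk\oplus\bar A\) and \(B=\Bbbk\oplus\bar B\). Then \(A*B\) decomposes as \(\Bbbk\) plus the span of alternating words in \(\bar A\) and \(\bar B\), and the decomposition reads
\[
    (A*B)_{\natural}\;\cong\;\iota_A(A_{\natural})\oplus\iota_B(\bar B_{\natural})\oplus\bigoplus_{n\ge1}\bigl((\bar A\otimes\bar B)^{\otimes n}\bigr)_{\mathbb Z/n}.
\]
Here \(\bar B_{\natural}\) denotes the image of \(\bar B\) in \(B_\natural\) modulo scalars. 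The last summand consists of cyclic coinvariants of alternating cyclic words of length \(2n\ge2\).

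\textbf{Injectivity of \(\iota_A\) and \(\iota_B\).} The retractions \(A*B\to A\), obtained by sending \(B\) to \(\Bbbk\), require augmentations, which we do not have. Instead, we will use the fact that the decomposition above exhibits \(A_\natural\) as a direct summand. Alternatively, fix a finite-dimensional representation \(\sigma\) of \(B\), which exists since \(B\) is RFD. For every finite-dimensional \(\rho\) of \(A\), the trace of \(\rho\otimes 1\) extended by \(1\otimes\sigma\) gives \(\dim\sigma\cdot\operatorname{tr}\rho\) on \(\iota_A\). This argument is enough for the radical statement, but full injectivity needs the decomposition.

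\textbf{The inclusion \(\supseteq\).} Every finite-dimensional representation of \(A*B\) restricts to a representation of \(A\). Hence \(\operatorname{tr}(\iota_A x)(\pi)=\operatorname{tr}(x)(\pi|_A)=0\) for \(x\in\operatorname{Rad}_{\mathrm{tr}}(A)\), and similarly for \(B\).

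\textbf{The inclusion \(\subseteq\).} Let \(z=\iota_A(a)+\iota_B(b)+w\) lie in the trace radical, where \(w\) is a combination of classes of alternating words of length \(\ge2\). The key step is the following claim. Whenever \(w\neq0\), there is a finite-dimensional representation \(\pi\) of \(A*B\) such that \(\operatorname{tr}(w)(\pi)\neq0\). Moreover, by a scaling or multiplicity trick, this holds in a way that cannot be cancelled by the \(A\)- and \(B\)-parts. The construction is as follows. Since \(A\) and \(B\) are RFD, choose finite-dimensional representations \(\rho\colon A\to\operatorname{End}(V)\) and \(\sigma\colon B\to\operatorname{End}(V)\) on a common space, possibly after taking direct sums to match dimensions. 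These are chosen so that the finitely many elements of \(\bar A\) and \(\bar B\) occurring in \(w\) act injectively on the relevant spans, meaning linearly independent elements map to linearly independent operators. Then consider \(\pi_g=\rho*(g\sigma g^{-1})\) for \(g\in GL(V)\). The function \(g\mapsto\operatorname{tr}(w)(\pi_g)\) is a polynomial in the entries of \(g\) and \(g^{-1}\), while the \(A\)- and \(B\)-parts contribute constants independent of \(g\). One shows that this function is nonconstant whenever \(w\ne0\). The argument will go through a Weingarten or invariant-theory style computation, or else by degenerating \(g\) so that the terms of each cyclic word length separate. A nonconstant polynomial forces \(z\notin\operatorname{Rad}_{\mathrm{tr}}\), hence \(w=0\). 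Then \(z=\iota_A(a)+\iota_B(b)\), and restricting to representations of the form \(\rho*\sigma\) on a common space (via direct sums with multiplicities) yields \(\operatorname{tr}(a)(\rho)\dim\sigma+\dim\rho\operatorname{tr}(b)(\sigma)=0\) for all such pairs. Scaling the multiplicity of \(\rho\) separately then shows that \(a\) and \(b\) lie in the respective radicals, up to the scalar ambiguity, which we will adjust using the unit.

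The main obstacle is the key claim: proving that traces of words in \(\rho(\bar A)\) and \(g\sigma(\bar B)g^{-1}\) detect every nonzero element of the cyclic-coinvariant summands. I would first try to reduce to the case where \(\rho(\bar A)\) and \(\sigma(\bar B)\) generate free-enough subalgebras. This would use the Klep--\v{S}penko tracial Nullstellensatz for free algebras, applied after substituting generic matrices, extending scalars to \(\bar\Bbbk\) if needed and descending at the end.
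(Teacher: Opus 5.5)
\textbf{There is a genuine gap: the key claim is not proved, and the construction you propose for it fails as stated.} Your treatment of the routine parts matches the paper: restriction gives $\supseteq$, and the tensor/multiplicity argument splits a class $\iota_A(\overline a)+\iota_B(\overline b)$ into radical elements after shifting by a multiple of the unit. (Rescaling multiplicities gives nothing new, since your identity is homogeneous. What produces the constant is varying $\rho$ with $\sigma$ fixed, which shows $\operatorname{tr}\rho(a)/\dim\rho$ is independent of $\rho$.)

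The gap is your key claim, which carries essentially all of the content of the theorem. Choosing $\rho,\sigma$ only so that the letters of $w$ act by linearly independent operators, and then varying $g\in\operatorname{GL}(V)$, is not enough. Take $A=B=M_2(\Bbbk)$ with their standard representations on $V=\Bbbk^2$, and let $A_0=B_0$ be the traceless matrices. Consider $w$ in the $r=2$ summand $\bigl((A_0\otimes B_0)^{\otimes2}\bigr)_{C_2}\cong\operatorname{Sym}^2(A_0\otimes B_0)$, which has dimension $45$. The function $g\mapsto\operatorname{tr}\pi_g(w)$ equals $P(g)/\det(g)^2$ with $P$ homogeneous of degree $4$ in the four entries of $g$. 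These functions therefore span at most $\binom{7}{4}=35$ dimensions, so some nonzero $w$ gives the zero (hence constant) function.

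The Klep--\v{S}penko route fails for a related reason. The tracial Nullstellensatz concerns evaluations of a free algebra at arbitrary matrix tuples. Here the operators are confined to images of representations of $A$ and $B$; for $A=M_2(\Bbbk)$, every representation is a multiple of the standard one. So you cannot make $\rho(A_0)$ and $\sigma(B_0)$ ``free enough'', nor substitute generic matrices for the letters of $w$.

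\textbf{The missing idea.} Whole operators cannot be prescribed, but RFD lets you prescribe a single matrix coefficient. For any functional $\lambda$ on a finite-dimensional $E\subseteq A_0$ there are $\pi,u,v$ with $v^*(\pi(x)u)=\lambda(x)$ on $E$ (Lemma~\ref{lemma_matrix_coefficient_separation}). The paper then proceeds as follows.
\begin{enumerate}[label=\textup{(\alph*)},leftmargin=5ex]
\item Take $r$ maximal with nonzero coinvariant component $w$, and pick a cyclically symmetrized decomposable functional $\Lambda$ with $\Lambda(w)\neq0$.
\item Glue $r$ such representations of $A$ and $r$ of $B$ on one space, so that the prescribed coefficients $\lambda_j,\mu_j$ form a closed cycle through basis vectors $e_1,\ldots,e_{2r}$ with invariant blocks $V_i$ (Lemma~\ref{lemma_two_algebra_coefficients_cyclic}).
\item Instead of a general $g$, conjugate by diagonal torus elements $g_A(t),g_B(t)$ and read off the coefficient of $t_1\cdots t_{2r}$ in $\operatorname{tr}\rho_t(h)$.
\item A total-degree count eliminates the $A$- and $B$-parts and all words of length $<2r$. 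The block structure forces each contributing index path to traverse the cycle exactly once, so the coefficient is $\Lambda(w)\neq0$.
\end{enumerate}
Your idea of degenerating $g$ to separate lengths points in this direction. Without the prescribed-coefficient construction, however, nothing guarantees that the surviving coefficient is nonzero.

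\textbf{Two smaller points.} First, injectivity of $\iota_A$ from the decomposition needs $\overline{1_B}\neq0$ in $B_{\natural}$, which follows from RFD in characteristic zero. Second, you should verify that the sum is direct. It is, because the images of $\iota_A$ and $\iota_B$ meet only in $\Bbbk\,\overline{1}$, and no nonzero multiple of the unit class lies in a trace radical.
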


It is known that the free product of two algebras is RFD if and only if both factors are RFD \cite[Theorem~2]{lichtman1983residual}.
Combining this result with Theorem~\ref{thm_trace_radical_free_product}, we obtain the following corollary.

\begin{corollary}\label{cor_trace_rfd_free_product}
    Let \(A\) and \(B\) be associative \(\Bbbk\)-algebras. Then \(A*B\) is trace-RFD if and only if both \(A\) and \(B\) are trace-RFD.
\end{corollary}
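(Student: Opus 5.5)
The plan is to reduce the corollary to Theorem~\ref{thm_trace_radical_free_product} together with Lichtman's theorem \cite[Theorem~2]{lichtman1983residual}. First I will reformulate Definition~\ref{def_trace_rfd}: an algebra \(C\) is trace-RFD exactly when \(C\) is RFD and \(\operatorname{Rad}_{\mathrm{tr}}(C)=0\). This holds because trace separation says precisely that no nonzero class \(\overline{c}\in C_{\natural}\) lies in the kernel of every functional \(\operatorname{tr}(-)(\rho)\).

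\emph{Forward direction.} Suppose \(A*B\) is trace-RFD. Then \(A*B\) is RFD, so \(A\) and \(B\) are RFD by \cite[Theorem~2]{lichtman1983residual}. One can also see this directly: \(A\) embeds into \(A*B\), and restricting finite-dimensional representations of \(A*B\) to \(A\) separates the elements of \(A\). Since both factors are RFD, Theorem~\ref{thm_trace_radical_free_product} applies and gives
\[
    0=\operatorname{Rad}_{\mathrm{tr}}(A*B)
    =\iota_A\bigl(\operatorname{Rad}_{\mathrm{tr}}(A)\bigr)\oplus\iota_B\bigl(\operatorname{Rad}_{\mathrm{tr}}(B)\bigr).
\]
Hence \(\iota_A(\operatorname{Rad}_{\mathrm{tr}}(A))=0\) and \(\iota_B(\operatorname{Rad}_{\mathrm{tr}}(B))=0\). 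The theorem also states that \(\iota_A\) and \(\iota_B\) are injective, so \(\operatorname{Rad}_{\mathrm{tr}}(A)=0=\operatorname{Rad}_{\mathrm{tr}}(B)\). Thus \(A\) and \(B\) are trace-RFD.

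\emph{Reverse direction.} Suppose \(A\) and \(B\) are trace-RFD. In particular they are RFD, so \(A*B\) is RFD by \cite[Theorem~2]{lichtman1983residual}. Theorem~\ref{thm_trace_radical_free_product} then gives
\[
    \operatorname{Rad}_{\mathrm{tr}}(A*B)=\iota_A(0)\oplus\iota_B(0)=0,
\]
so \(A*B\) is trace-RFD.

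I do not expect any step here to be a real obstacle, since all the substance is in Theorem~\ref{thm_trace_radical_free_product}. The only point needing care is that the theorem requires both factors to be RFD. In each direction this hypothesis must therefore be established first, using Lichtman's result, before the radical formula is invoked.
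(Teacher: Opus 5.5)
Your proof is correct and follows the paper's argument. The paper derives the corollary by combining Lichtman's theorem \cite[Theorem~2]{lichtman1983residual} for the RFD part with Theorem~\ref{thm_trace_radical_free_product} and the injectivity of \(\iota_A,\iota_B\) for the vanishing of the trace radicals, which is exactly your argument.
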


In the case of group algebras, our results admit a concrete group-theoretic interpretation. A group is \emph{residually finite} if every nontrivial element has nontrivial image in some finite quotient and \emph{conjugacy separable} if every pair of non-conjugate elements has non-conjugate images in a finite quotient.
Both properties are preserved under free products: residual finiteness by \cite{Gruenberg1957} and conjugacy separability by \cite{Stebe1970,Remeslennikov1971}.
Proposition~\ref{prop_group_algebra} below shows that residual finiteness of \(G\) implies that \(\Bbbk[G]\) is RFD, while over an algebraically closed field conjugacy separability implies that \(\Bbbk[G]\) is trace-RFD.
The isomorphism \(\Bbbk[G]*\Bbbk[H]\cong\Bbbk[G*H]\) shows that Corollary~\ref{cor_trace_rfd_free_product} is an analogue, for arbitrary associative algebras, of these group-theoretic preservation results.

The following application gives a concrete criterion for free products of finite-dimensional algebras.

\begin{corollary}\label{cor_finite_dimensional_free_products}
    Suppose that \(\Bbbk\) is algebraically closed, and let \(A_1,\ldots,A_m\) be finite-dimensional \(\Bbbk\)-algebras.
    For each \(i\), let \(J(A_i)\) denote the Jacobson radical of \(A_i\), let \(\overline{J(A_i)}\subseteq(A_i)_{\natural}\) denote its image in the cocenter, and let \(\iota_i\colon(A_i)_{\natural}\to(A_1*\cdots*A_m)_{\natural}\) be the map induced by the canonical homomorphism.
    Then
    \[
        \operatorname{Rad}_{\mathrm{tr}}(A_1*\cdots*A_m)
        =
        \bigoplus_{i=1}^{m}
        \iota_i\bigl(\overline{J(A_i)}\bigr).
    \]
    Consequently,
    \[
        A_1*\cdots*A_m\text{ is trace-RFD}
        \quad\Longleftrightarrow\quad
        J(A_i)\subseteq[A_i,A_i]\text{ for every }i.
    \]
\end{corollary}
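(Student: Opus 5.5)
The plan is to deduce the statement from Theorem~\ref{thm_trace_radical_free_product} by induction on \(m\), after computing the trace radical of a single finite-dimensional algebra. Finite-dimensional algebras are RFD, since the regular representation is faithful. Free products of RFD algebras are RFD by \cite[Theorem~2]{lichtman1983residual}. We use associativity of the free product:
\[
A_1*\cdots*A_m\cong(A_1*\cdots*A_{m-1})*A_m .
\]
Applying Theorem~\ref{thm_trace_radical_free_product} inductively gives
\[
\operatorname{Rad}_{\mathrm{tr}}(A_1*\cdots*A_m)=\bigoplus_i \iota_i\bigl(\operatorname{Rad}_{\mathrm{tr}}(A_i)\bigr).
\]
The maps \(\iota_i\) are injective because they are composites of the injective maps supplied by the theorem. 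So the whole problem reduces to showing that \(\operatorname{Rad}_{\mathrm{tr}}(A)=\overline{J(A)}\) for a finite-dimensional algebra \(A\) over an algebraically closed field of characteristic zero.

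\emph{Inclusion \(\overline{J(A)}\subseteq\operatorname{Rad}_{\mathrm{tr}}(A)\).} Let \(V\) be any finite-dimensional module. Take a composition series of \(V\), with simple factors \(S_1,\dots,S_r\). Every \(j\in J(A)\) annihilates each \(S_k\), so \(\rho(j)\) is strictly block upper triangular and hence has trace \(0\).

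\emph{Reverse inclusion.} By additivity of traces on short exact sequences, a class \(\overline a\) lies in \(\operatorname{Rad}_{\mathrm{tr}}(A)\) if and only if \(\operatorname{tr}(a_S)=0\) for every simple module \(S\). By Wedderburn's theorem over the algebraically closed field, \(A/J(A)\cong\prod_k M_{n_k}(\Bbbk)\), and the simple modules are the standard modules \(\Bbbk^{n_k}\). We have
\[
(A/J)_\natural\cong\bigoplus_k M_{n_k}(\Bbbk)_\natural\cong\Bbbk^{s},
\]
and this isomorphism is given exactly by the vector of traces \((\operatorname{tr}(a_{S_k}))_k\). Since \(\operatorname{char}\Bbbk=0\), each \(\operatorname{tr}\) is a nonzero functional on \(M_{n_k}(\Bbbk)\) with kernel the commutators. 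Hence if all these traces vanish, then \(a\in J(A)+[A,A]\), because \([A/J,A/J]\) is the image of \([A,A]\). This gives \(\overline a\in\overline{J(A)}\).

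\emph{Consequence.} The algebra \(A_1*\cdots*A_m\) is RFD, so it is trace-RFD if and only if its trace radical vanishes. By the direct-sum formula and injectivity of the \(\iota_i\), this happens if and only if every \(\overline{J(A_i)}=0\), that is, \(J(A_i)\subseteq[A_i,A_i]\).

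The only delicate point is the identification \((A/J)_\natural\cong\Bbbk^s\) via traces, together with the lifting of commutators. Both are routine once Wedderburn's theorem is in hand.
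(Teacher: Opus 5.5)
Your proposal is correct and follows essentially the same route as the paper: you show $\operatorname{Rad}_{\mathrm{tr}}(A_i)=\overline{J(A_i)}$ via Artin--Wedderburn and traces on the matrix factors, then combine Lichtman's theorem with repeated applications of Theorem~\ref{thm_trace_radical_free_product}. The differences are cosmetic: you obtain $\overline{J(A_i)}\subseteq\operatorname{Rad}_{\mathrm{tr}}(A_i)$ from a composition series rather than from nilpotency of $J(A_i)$, and you spell out the induction and the final trace-RFD equivalence, which the paper leaves implicit.
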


In particular, the free products of finite-dimensional semisimple algebras are trace-RFD. These free products were recently studied via quivers in \cite{BuchananDimitrovGracePaquetteWehlauXu2024}.

\medskip

\textbf{Acknowledgments.}
The author used OpenAI's ChatGPT 5.5 to assist in developing the proof of Proposition~\ref{prop_cyclic_trace_detection}, including Lemma~\ref{lemma_two_algebra_coefficients_cyclic}.
The author independently verified the resulting arguments and takes full responsibility for the mathematical content of the paper.
The author was partially supported by the European Research Council (ERC), Grant Agreement No.~101041499.

\section{Cocenters of free products}\label{sec:cocenter_free_product}

Let \(A\) and \(B\) be \(\Bbbk\)-algebras. Throughout this section we fix vector-space decompositions
\begin{equation}
    A=\Bbbk 1\oplus A_0,\qquad B=\Bbbk 1\oplus B_0.
\end{equation}
Thus \(A_0\simeq A/\Bbbk\) and \(B_0\simeq B/\Bbbk \) as vector spaces.

We write \(A*B\) for the free product of \(A\) and \(B\) as unital algebras --- their coproduct in the category of unital \(\Bbbk\)-algebras. It is equipped with unit-preserving homomorphisms \(A\to A*B\) and \(B\to A*B\), under which \(1_A\) and \(1_B\) acquire the common image \(1_{A*B}\), and is universal among unital \(\Bbbk\)-algebras admitting such a pair of homomorphisms.

Recall that \(A*B=\operatorname{T}(A\oplus B)/J\) is the quotient of the tensor algebra on the vector space \(A\oplus B\) by the two-sided ideal \(J\) generated by \(a\otimes a'-aa'\) and \(b\otimes b'-bb'\) (\(a,a'\in A\), \(b,b'\in B\)) together with \(1_A-1\) and \(1_B-1\), where \(1\) is the unit of \(\operatorname{T}(A\oplus B)\). The canonical maps \(A,B\to A*B\) are injective, and, writing \(C_A:=A_0\) and \(C_B:=B_0\), the free product decomposes as a vector space:
\begin{equation}\label{f_free_product_normal_form}
    A*B=\Bbbk 1\oplus
    \bigoplus_{m\geq 1}\;
    \bigoplus_{\substack{\varepsilon_1,\ldots,\varepsilon_m\in\{A,B\}\\ \varepsilon_i\neq\varepsilon_{i+1}}}
    C_{\varepsilon_1}\otimes\cdots\otimes C_{\varepsilon_m}.
\end{equation}
An element of a summand \(C_{\varepsilon_1}\otimes\cdots\otimes C_{\varepsilon_m}\) of~\eqref{f_free_product_normal_form} is \emph{homogeneous of length \(m\)}, while a nonzero scalar is homogeneous of length \(0\). The \emph{length} of an element \(f\in A*B\) is the largest \(m\) for which the homogeneous length-\(m\) component of \(f\) is nonzero, and that component is the \emph{top component} of \(f\). A product \(c_1\cdots c_m\) whose letters \(c_p\) are drawn alternately from \(A_0\) and \(B_0\) is a \emph{reduced word} of length \(m\), and the reduced words of length \(m\) span the homogeneous part of length \(m\). For each \(m\geq1\) there are exactly two summands of length \(m\), distinguished by the first tensor factor. In particular the homogeneous part of even length \(2r\) is \((A_0\otimes B_0)^{\otimes r}\oplus(B_0\otimes A_0)^{\otimes r}\).

\begin{proposition}\label{prop_free_product_cocenter_decomposition}
    There is an isomorphism of vector spaces
    \begin{equation}\label{f_free_product_cocenter_decomposition}
        (A*B)_{\natural}
        \cong
        \frac{A_{\natural}\oplus B_{\natural}}
        {\Bbbk(\overline{1_A},-\overline{1_B})}
        \oplus
        \bigoplus_{r\geq1}
        \left((A_0\otimes B_0)^{\otimes r}\right)_{C_r},
    \end{equation}
    where \(C_r=\mathbb{Z}/r\mathbb{Z}\) cyclically permutes the \(r\) tensor factors \(A_0\otimes B_0\) and \((-)_{C_r}\) denotes coinvariants. Under this isomorphism, the class of \(a\in A\) (respectively, \(b\in B\)) in \((A*B)_{\natural}\) maps to the class of \((\overline a,0)\) (respectively, \((0,\overline b)\)) in the quotient on the right-hand side. For \(a_i\in A_0\) and \(b_i\in B_0\), the class of \(a_1b_1\cdots a_rb_r\) in \((A*B)_{\natural}\) maps to the coinvariant class of \((a_1\otimes b_1)\otimes\cdots\otimes(a_r\otimes b_r)\).
\end{proposition}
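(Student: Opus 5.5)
The proof constructs maps in both directions between \([A*B,A*B]\)-cosets and the right-hand side of~\eqref{f_free_product_cocenter_decomposition}, and checks they are mutually inverse. Write \(Q\) for the right-hand side.

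\emph{Step 1: a map \(\Phi\colon A*B\to Q\) vanishing on commutators.*}
Define \(\Phi\) on the normal form~\eqref{f_free_product_normal_form}:
\begin{itemize}[leftmargin=*]
\item a scalar \(\lambda\) goes to the class of \((\lambda\overline{1_A},0)\);
\item \(a\in A_0\) goes to \((\overline a,0)\), and \(b\in B_0\) goes to \((0,\overline b)\);
\item a reduced word of even length \(a_1b_1\cdots a_rb_r\) goes to the coinvariant class of \((a_1\otimes b_1)\otimes\cdots\otimes(a_r\otimes b_r)\);
\item a word \(b_1a_2b_2\cdots a_rb_r a_1\) starting with \(B_0\) goes to the same class, since it is a cyclic rotation;
\item reduced words of odd length \(\geq3\) go to the image under \(\Phi\) of the shorter word obtained by cyclically moving the last letter to the front and multiplying it into the first letter.
\end{itemize}
For example, \(a_1b_1\cdots b_{r}a'\) is sent to \(\Phi(a'a_1\,b_1\cdots b_r)\). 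Here \(a'a_1\in A\) is decomposed as \(\lambda+c\) with \(c\in A_0\), and this gives a word of length \(2r\) plus \(\lambda\) times a word of length \(2r-1\). Both lengths are shorter, so this is a recursion on length, and \(\Phi\) is well defined as a linear map.

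\emph{Step 2: \(\Phi\) kills \([A*B,A*B]\).*}
It suffices to show \(\Phi(uw)=\Phi(wu)\) for \(u\) a single letter in \(A_0\) or \(B_0\) and \(w\) a reduced word, because letters generate \(A*B\) as an algebra and \([x,yz]=[xy,z]+[zx,y]\) reduces general commutators to these. This is the main obstacle: a case analysis according to whether \(u\) merges with the first or last letter of \(w\), and the parities of the lengths involved.
\begin{itemize}[leftmargin=*]
\item If no merging occurs at either end, \(uw\) and \(wu\) are both reduced. If their lengths are even, they are cyclic rotations of one another and the coinvariants absorb the difference. If odd, the recursive definition in Step 1 was chosen exactly to make them agree.
\item If merging occurs, say \(u,w_1\in A_0\), then one writes \(uw_1=\lambda+c\). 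One then uses induction on length together with commutativity in \(A_\natural\) for the length-one pieces, e.g.\ \(\Phi(aa')=\Phi(a'a)\), which holds because \(\overline{aa'}=\overline{a'a}\) in \(A_\natural\).
\end{itemize}
I would organize this by first proving the identity for \(w\) of length \(\le 2\) directly. Then I would induct on the length of \(w\), using the observation that the odd-length rule says \(\Phi(xv)=\Phi(vx)\) whenever \(x\) is the last letter of \(v\).

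\emph{Step 3: a map \(\Psi\colon Q\to(A*B)_\natural\) and bijectivity.*}
Define \(\Psi\) by \((\overline a,\overline b)\mapsto\overline{a+b}\) and \((a_1\otimes b_1)\otimes\cdots\otimes(a_r\otimes b_r)\mapsto\overline{a_1b_1\cdots a_rb_r}\).
\begin{itemize}[leftmargin=*]
\item \(\Psi\) is well defined. It is well defined on \(A_\natural\) because \([A,A]\subseteq[A*B,A*B]\). The element \((\overline{1_A},-\overline{1_B})\) goes to \(1-1=0\). Cyclic rotation of the \(r\) factors \(a_ib_i\) changes the word by a commutator.
\item \(\Phi\circ\Psi=\mathrm{id}\) is immediate from the definitions.
\item \(\Psi\circ\Phi\) is the identity on \((A*B)_\natural\). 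It suffices to check on reduced words. For even words starting with \(B\), this is a single rotation. For odd words, each recursion step replaces a word by a conjugate rotation, which is equal modulo commutators.
\end{itemize}
Hence \(\Phi\) induces the claimed isomorphism. The compatibility statements about classes of \(a\), \(b\), and \(a_1b_1\cdots a_rb_r\) hold by construction.
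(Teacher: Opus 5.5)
Your proposal is correct and is essentially the paper's proof: the same recursively defined $\Phi$ (folding the last letter into the first on odd words), the same reduction of $\Phi([A*B,A*B])=0$ to a rotation check on reduced words (which the paper states as a claim that ``can be checked directly''), and the same map $\Psi$ (the paper's $\jmath$), with your verification of $\Psi\circ\Phi=\mathrm{id}$ being the paper's surjectivity argument. One small slip in Step 2: if $u$ merges with neither end of $w$, then $w$ has odd length, so $uw$ and $wu$ have even length and your ``odd'' subcase never occurs; the recursion is what handles the case where $u$ merges with exactly one end of an even-length $w$.
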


\begin{proof}
    Write
    \[
        P_{A,B}:=\frac{A_{\natural}\oplus B_{\natural}}{\Bbbk(\overline{1_A},-\overline{1_B})},
    \]
    and let \(q_A\colon A_{\natural}\to P_{A,B}\) and \(q_B\colon B_{\natural}\to P_{A,B}\) be the canonical maps.
    For \(r\geq1\), set \(W_r:=\bigl((A_0\otimes B_0)^{\otimes r}\bigr)_{C_r}\).
    Set \(D:=P_{A,B}\oplus\bigoplus_{r\geq1}W_r\).
    For \(w\in(A_0\otimes B_0)^{\otimes r}\), write \([w]_{C_r}\) for its class in \(W_r\).
    Define a linear map
    \[
        \jmath\colon D\longrightarrow(A*B)_{\natural}
    \]
    by
    \[
        \jmath\bigl(q_A(\overline a)\bigr):=\overline a,
        \qquad
        \jmath\bigl(q_B(\overline b)\bigr):=\overline b,
        \qquad
        \jmath\bigl([w]_{C_r}\bigr):=\overline w.
    \]
    The canonical homomorphisms \(A\to A*B\) and \(B\to A*B\) induce maps on the corresponding commutator quotients and send \(\overline{1_A}\) and \(\overline{1_B}\) to the same element of \((A*B)_{\natural}\), so the first two prescriptions are compatible with the quotient defining \(P_{A,B}\).
    The last prescription is well-defined because moving the last block \(a_r\otimes b_r\) to the front replaces \(a_1b_1\cdots a_rb_r\) by \(a_rb_ra_1b_1\cdots a_{r-1}b_{r-1}\), which has the same cocenter class.

    We first prove that \(\jmath\) is surjective.
    By~\eqref{f_free_product_normal_form}, the cocenter is spanned by the class of the unit and the classes of reduced words.
    The unit and the reduced words of length \(1\) lie in the image of \(\jmath\).
    Every reduced word of even length \(2r\) has the same cocenter class as a cyclic rotation beginning in \(A_0\), and hence its class lies in the image of \(\jmath\).
    Let \(u=c_1\cdots c_m\) be a reduced word of odd length \(m\geq3\), and write \(c_m c_1=\sigma 1+h\), where \(\sigma\in\Bbbk\) and \(h\in A_0\cup B_0\).
    Cyclicity in the commutator quotient gives
    \begin{equation}\label{f_free_product_odd_folding}
        \overline u
        =\sigma\,\overline{c_2\cdots c_{m-1}}
        +\overline{h c_2\cdots c_{m-1}}.
    \end{equation}
    The first term is represented by a reduced word of odd length \(m-2\), whereas the second term is either zero or represented by a reduced word of even length \(m-1\).
    Induction on \(m\) therefore shows that the class of every odd reduced word also lies in the image of \(\jmath\), proving surjectivity.

    We will need the following claim, which can be checked directly. Let \(X\) be a vector space and let \(T\colon A*B\to X\) be linear.
    Suppose that the restrictions of \(T\) to \(A\) and \(B\) vanish on commutators and that
    \begin{equation}\label{f_free_product_cyclic_invariance}
        T(c_1\cdots c_m)=T(c_m c_1\cdots c_{m-1})
    \end{equation}
    for every reduced word \(c_1\cdots c_m\) of length \(m\geq2\). Then \(T(xy)=T(yx)\) for all \(x,y\in A*B\).

    We now define a linear map \(\Phi_0\colon A*B\to D\), prove that it satisfies the two assumptions above, and then descend it to the desired left inverse of \(\jmath\).
    For a reduced word \(u\) of even length \(2r\), choose a cyclic rotation \(a_1b_1\cdots a_rb_r\) beginning in \(A_0\) and let \(\langle u\rangle\in W_r\) be the class of \((a_1\otimes b_1)\otimes\cdots\otimes(a_r\otimes b_r)\).
    Any two such rotations differ by a cyclic permutation of the \(r\) blocks, so \(\langle u\rangle\) is well-defined.
    We define \(\Phi_0\) recursively on the decomposable tensors in the normal form~\eqref{f_free_product_normal_form} as follows.
    Set
    \[
        \Phi_0(1):=q_A(\overline{1_A})=q_B(\overline{1_B}),
        \qquad
        \Phi_0(a):=q_A(\overline a),
        \qquad
        \Phi_0(b):=q_B(\overline b)
    \]
    for \(a\in A_0\) and \(b\in B_0\), and set \(\Phi_0(u):=\langle u\rangle\) for every reduced word \(u\) of even length.
    If \(u=c_1\cdots c_m\) has odd length \(m\geq3\), write \(c_m c_1=\sigma 1+h\), where \(\sigma\in\Bbbk\) and \(h\in A_0\cup B_0\), and set
    \begin{equation}\label{f_free_product_Phi_recursion}
        \Phi_0(u):=\sigma\Phi_0(c_2\cdots c_{m-1})+\Phi_0(h c_2\cdots c_{m-1}).
    \end{equation}
    Every word on the right has smaller length than \(u\), so the recursion terminates.
    The scalar and complementary parts of \(c_m c_1\) depend bilinearly on \(c_m\) and \(c_1\), and the map \(\Phi_0\colon A*B\to D\) is therefore linear.
    Since the recursion decreases the word length, \(\Phi_0(u)\) is a finite sum of elements of \(P_{A,B},W_1,\ldots,W_{\lfloor m/2\rfloor}\).

    We now verify the two assumptions above for \(T=\Phi_0\).
    Its restrictions to \(A\) and \(B\) factor through \(A_{\natural}\) and \(B_{\natural}\), respectively, and therefore vanish on commutators.
    If \(u\) is a reduced word of odd length, identity~\eqref{f_free_product_cyclic_invariance} is exactly the recursive definition~\eqref{f_free_product_Phi_recursion}.
    If \(u\) is a reduced word of even length, the same identity follows because \(\langle u\rangle\) is unchanged by cyclically rotating \(u\) by one letter.
    Thus~\eqref{f_free_product_cyclic_invariance} holds for every reduced word, and the preceding argument shows that \(\Phi_0\) vanishes on \([A*B,A*B]\).
    It therefore descends to a linear map
    \[
        \Phi\colon(A*B)_{\natural}\longrightarrow D.
    \]

    Obviously, \(\Phi\jmath=\operatorname{id}_D\), so \(\jmath\) is injective.
    Since \(\jmath\) is also surjective, it is an isomorphism, and~\eqref{f_free_product_cocenter_decomposition} follows.
\end{proof}

\begin{corollary}\label{cor_trace_separation_upgrade}
    Let \(A\) and \(B\) be \(\Bbbk\)-algebras with \(A\neq\Bbbk1_A\) and \(B\neq\Bbbk1_B\). If the traces of the finite-dimensional representations of \(A*B\) separate \((A*B)_{\natural}\), then \(A*B\) is RFD.
\end{corollary}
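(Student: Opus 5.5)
We have to show that if trace functionals separate \((A*B)_{\natural}\), then every nonzero \(f\in A*B\) survives in some finite-dimensional representation. The idea is to turn a nonzero element \(f\) into a nonzero cocenter class by multiplying it with a suitable element. Then trace separation gives a representation \(\rho\) with \(\operatorname{tr}(\rho(fg))\neq0\), which forces \(\rho(f)\neq0\). So it suffices to find, for each nonzero \(f\), some \(g\in A*B\) with \(\overline{fg}\neq0\) in \((A*B)_{\natural}\).

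To find \(g\), use the hypotheses \(A\neq\Bbbk1_A\) and \(B\neq\Bbbk1_B\): pick nonzero \(a\in A_0\) and \(b\in B_0\). Let \(f\) have length \(m\) with top component \(f_m\neq0\).

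\emph{Choice of multiplier.} We multiply \(f\) on both sides by alternating letters \(a\) and \(b\), chosen so that every reduced word in \(f_m\) is padded into a word of even length \(2r\) that begins in \(A_0\). For instance, suppose the top-length summands of \(f\) lie in \(C_{\varepsilon_1}\otimes\cdots\otimes C_{\varepsilon_m}\). Consider \(g_1 f g_2\), where \(g_1,g_2\) are products like \(abab\cdots\) or \(baba\cdots\) such that the letters adjacent to \(f\)'s first and last letters come from the opposite algebra. Since \(f_m\) may have components in both summands of length \(m\), we first split \(f\) by composing on the left with \(a\) or \(b\) as needed. Simplest concretely: replace \(f\) by \(f'=b f b\) or \(a f a\) and so on, to kill the ambiguity. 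Products of elements of \(A_0\) with elements of \(A_0\) may produce scalar parts and shorten words. However, the top component of \(g_1fg_2\) is exactly the concatenation \(g_1\otimes f_m\otimes g_2\) in normal form~\eqref{f_free_product_normal_form}, whenever the junction letters alternate. This is nonzero because the tensor product of nonzero tensors is nonzero. Lower-order terms stay of smaller length.

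\emph{Nonvanishing in the cocenter.} By Proposition~\ref{prop_free_product_cocenter_decomposition}, the cocenter is graded by \(r\), and its top piece is \(W_r=((A_0\otimes B_0)^{\otimes r})_{C_r}\). I then check that the class of \(F:=g_1fg_2\) in \(W_r\) (the top \(r\)) is nonzero. The projection to \(W_r\) of \(\overline F\) only receives contributions from length-\(2r\) words, because the folding recursion~\eqref{f_free_product_Phi_recursion} only lowers lengths. Here \(2r\) is the length of \(F\), and we arrange this length to be even. So it remains to ensure that the coinvariant class of the top tensor \(t=g_1\otimes f_m\otimes g_2\) is nonzero.

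\emph{Main obstacle.} A nonzero tensor can have zero coinvariant class, e.g.\ \(x-\sigma x\). To prevent this, I choose the padding to be long and asymmetric, for example \(g_1=(ab)^N\) with \(N\) larger than \(m\), and \(g_2\) short. Coinvariants are computed via the symmetrization map \(t\mapsto\sum_{\sigma\in C_r}\sigma t\) (characteristic zero), and the class of \(t\) is zero iff this sum is zero. Pick a linear functional \(\lambda\) on \(A_0\) with \(\lambda(a)=1\) and \(\mu\) on \(B_0\) with \(\mu(b)=1\). Apply a tensor of functionals that equals \(\lambda\otimes\mu\) on the first \(N\) blocks and is a functional detecting \(f_m\) on the remaining blocks. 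The nonidentity rotations of \(t\) shift the long run \((a\otimes b)^{\otimes N}\) against the detecting functional. Choosing the detecting functionals to also kill \(a\) and \(b\) where possible, i.e.\ complementing \(\Bbbk a\) and \(\Bbbk b\), then makes all nontrivial rotations evaluate to zero. The delicate case is when \(f_m\)'s letters involve \(a\) or \(b\) themselves. There I would insert a distinguished marker pattern, such as \(a\,b\,a\,b'\) with \(b'\) chosen linearly independent from \(b\) when \(\dim B_0\geq2\), or use run lengths \(N\) and \(N+1\) otherwise, so that the rotation is identified uniquely. This combinatorial separation is the step I expect to require the most care.
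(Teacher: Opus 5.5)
Your reduction is sound, and it is a different route from the paper's. If \(\overline{g_1fg_2}\neq0\), trace separation gives \(\rho\) with \(\operatorname{tr}\rho(g_1fg_2)\neq0\), hence \(\rho(f)\neq0\). The paper instead cites Lichtman's theorem to reduce to the RFD property of \(A\) and \(B\). It then only needs the summand \(A_0\otimes B_0\), where \(C_1\) is trivial and \(\overline{ab}\mapsto a\otimes b\neq0\) is automatic. Your route would avoid Lichtman's theorem and would prove more: no nonzero two-sided ideal of \(A*B\) lies in \([A*B,A*B]\). Your splitting step is fine, and so is the observation that the \(W_r\)-component of \(\overline F\) is the coinvariant class of the top tensor. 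The gap is the decisive step, the nonvanishing of that coinvariant class. You do not prove it, and the mechanism you describe (a long run of \(a\otimes b\) plus functionals complementing \(\Bbbk a,\Bbbk b\)) fails.

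A run of a single repeated block, on either side, can never suffice. Take \(A_0=\Bbbk a\) and \(B_0=\Bbbk b\oplus\Bbbk b'\) (e.g.\ \(A=\Bbbk[a]/(a^2)\), \(B=\Bbbk[b,b']/(b,b')^2\)), and \(f=abab'-ab'ab\neq0\).
\begin{itemize}
\item For all \(N,M\geq0\), both reduced words of \((ab)^Nf(ab)^M\) rotate to \((ab)^{N+M+1}ab'\), so \(\overline{(ab)^Nf(ab)^M}=0\).
\item Since \(A_0\otimes B_0\) is two-dimensional, the top tensor \(x\otimes x'-x'\otimes x\) is a multiple of \(y\otimes y'-y'\otimes y\) for any block \(y\) and suitable \(y'\). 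So the same failure occurs for every choice of repeated block.
\item No functional on \(A_0\) kills \(a\), and a one-step rotation lays most of the run onto itself, so nontrivial rotations need not evaluate to zero.
\end{itemize}

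The marker you mention at the end is therefore essential, and its effect has to be proved. One way to do it:
\begin{itemize}
\item If \(\dim(A_0\otimes B_0)=1\), then \(C_r\) acts trivially and there is nothing to prove.
\item Otherwise choose a decomposable \(x_1\) (such as \(a'\otimes b\) or \(a\otimes b'\)) linearly independent from \(x_0=a\otimes b\). Extend \(\{x_0,x_1\}\) to a basis of \(A_0\otimes B_0\), and expand the top tensor \(t\in(A_0\otimes B_0)^{\otimes s}\) in the induced basis of words.
\item Tensor powers then become permutation modules for the cyclic groups, so their coinvariants have the necklaces as a basis.
\item For \(N>s\), the map \(w\mapsto[w\,x_1\,x_0^{N}]\) is injective on words of length \(s\). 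The cyclic word contains a unique run of \(x_0\)'s of length at least \(N\), and this run starts right after the marker \(x_1\). Skipping its first \(N\) letters and reading the next \(s\) recovers \(w\).
\item Hence right multiplication by the reduced word \(a'b(ab)^N\) (or \(ab'(ab)^N\)) produces a nonzero class.
\end{itemize}
Without an argument of this kind, the proof is incomplete.
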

\begin{proof}
    By \cite[Theorem~2]{lichtman1983residual}, it suffices to prove that \(A\) and \(B\) are RFD. Suppose \(A\) is not RFD, and pick a nonzero \(a\in A\) annihilated by every finite-dimensional representation of \(A\).
    Any finite-dimensional representation sends \(1_A\) to \(\operatorname{id}\neq0\), so no nonzero scalar multiple of \(1_A\) is so annihilated, and \(a\notin\Bbbk1_A\).
    As \(B\neq\Bbbk1_B\), choose \(b\in B\setminus\Bbbk1_B\).
    Choose vector-space decompositions \(A=\Bbbk1_A\oplus A_0\) and \(B=\Bbbk1_B\oplus B_0\) such that \(a\in A_0\) and \(b\in B_0\).
    The isomorphism from Proposition~\ref{prop_free_product_cocenter_decomposition} sends \(\overline{ab}\in (A*B)_{\natural}\) to \(a\otimes b\in A_0\otimes B_0\), which is nonzero. Since traces of finite-dimensional representations of \(A*B\) separate \((A*B)_{\natural}\), there is a finite-dimensional representation \(\rho\) of \(A*B\) with \(\operatorname{tr}\rho(ab)\neq0\).
    But every finite-dimensional representation of \(A*B\) restricts to one of \(A\), so \(\rho(a)=0\) and \(\operatorname{tr}\rho(ab)=0\), a contradiction.
    Hence \(A\) is RFD, and symmetrically \(B\) is RFD.
\end{proof}

\section{Trace radicals of free products}\label{sec:trace_refinement}

\begin{lemma}\label{lemma_matrix_coefficient_separation}
    Let \(C\) be an RFD \(\Bbbk\)-algebra and let \(C=\Bbbk 1\oplus C_0\) be a vector-space decomposition. Let \(E\subseteq C_0\) be finite-dimensional and let \(0\neq\lambda\in E^*\) be a linear functional. Then there exist a finite-dimensional representation \(\pi\colon C\to\operatorname{End}_{\Bbbk}(V)\), a basis \(\beta\) of \(V\), and distinct vectors \(u,v\in\beta\) such that
    \begin{equation}\label{f_free_product_matrix_coefficient}
        v^*\bigl(\pi(x)u\bigr)=\lambda(x)\quad\text{for every }x\in E,
    \end{equation}
    where \(v^*\in V^*\) is the coordinate functional of \(\beta\) dual to \(v\).
\end{lemma}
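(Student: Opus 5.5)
The plan is to first find a single finite-dimensional representation \(\pi_0\) of \(C\) that is injective on \(E\), and then to realize \(\lambda\) as an off-diagonal matrix coefficient of a suitable amplification of \(\pi_0\).

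For the first step, since \(C\) is RFD, for each nonzero \(x\in E\) there is a finite-dimensional representation not killing \(x\). Choose a basis \(e_1,\dots,e_n\) of \(E\) and proceed inductively. Given a finite direct sum \(\pi_0\) of such representations, if \(\ker\pi_0\cap E\neq0\), pick a nonzero element there and add a representation not killing it. The dimension of \(\ker\pi_0\cap E\) strictly drops at each step, so after at most \(n\) steps we obtain a finite-dimensional representation \(\pi_0\colon C\to\operatorname{End}(V_0)\) with \(\pi_0|_E\) injective.

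For the second step, consider the linear functionals \(x\mapsto \phi(\pi_0(x)w)\) for \(w\in V_0\) and \(\phi\in V_0^*\). These span the image of the transpose of \(\pi_0|_E\colon E\to\operatorname{End}(V_0)\), because the functionals \(T\mapsto\phi(Tw)\) span \(\operatorname{End}(V_0)^*\). Injectivity of \(\pi_0|_E\) means this transpose is surjective onto \(E^*\). Hence there are finitely many pairs \((w_i,\phi_i)\), \(i=1,\dots,k\), with \(\lambda(x)=\sum_i\phi_i(\pi_0(x)w_i)\) for all \(x\in E\).

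Now set \(V=V_0^{\oplus k}\) with \(\pi=\pi_0^{\oplus k}\). Put \(u=(w_1,\dots,w_k)\) and let \(\psi=(\phi_1,\dots,\phi_k)\in V^*\). Then \(\psi(\pi(x)u)=\lambda(x)\) on \(E\).

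The remaining point is to make \(u\) a basis vector and \(\psi\) its companion coordinate functional \(v^*\) for a distinct basis vector \(v\). This is the main (mild) obstacle: we need \(u\neq0\), a vector \(v\) with \(\psi(v)=1\), \(\psi(u)=0\), and \(u,v\) linearly independent. We will first arrange \(\psi(u)=0\) by adding one more copy of \(V_0\), i.e. replacing \(k\) by \(k+1\). In the new summand we take \(w_{k+1}\) and \(\phi_{k+1}\) so that \(\phi_{k+1}(w_{k+1})=-\psi(u)\). Since \(\pi_0(1)=\mathrm{id}\), the contribution \(\phi_{k+1}(\pi_0(x)w_{k+1})\) is generally nonzero on \(E\), so this naive patch fails. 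A cleaner fix is to use the trivial structure instead: take \(V=V_0^{\oplus k}\oplus V_0\oplus V_0\) and use the extra copies to add \(\phi'(w')=\) anything while cancelling the \(E\)-part, by choosing in two copies \((w',\phi')\) and \((w',-\phi')\) with \(\phi'(w')\) of opposite signs. This cancels everything, so it does not help either.

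Instead we will exploit \(E\subseteq C_0\) only mildly and argue more simply. We already have \(\lambda\neq0\), hence \(\psi\neq0\) and \(u\neq0\). If \(\psi(u)\neq0\), we replace \(u\) by \(u'=u\oplus w\) in \(V\oplus V_0\) and \(\psi\) by \(\psi'=\psi\oplus\phi\), with \(\phi(w)=-\psi(u)\) and \(\phi(\pi_0(x)w)=0\) for \(x\in E\). Such a pair exists by taking \(\pi_0\) to be a direct sum of a representation injective on \(E\) and the one-dimensional summand given by \(w\). Concretely, if \(C\) has no character we instead choose \(\phi\) vanishing on the subspace \(\pi_0(E)w\), which has dimension at most \(\dim E<\dim V_0\) after amplifying \(V_0\). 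We then need \(\phi(w)\neq0\), which holds as long as \(w\notin\pi_0(E)w\). If this fails for all \(w\), we enlarge \(V_0\) by summands until it holds.

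Once \(\psi(u)=0\) with \(u\neq0\), we pick \(v\) with \(\psi(v)=1\). Then \(u,v\) are independent, since \(\psi(u)=0\neq\psi(v)\). We extend \(\{u,v\}\) by a basis of \(\ker\psi\) containing \(u\), adjoined with \(v\). In the resulting basis \(\beta\), the dual functional \(v^*\) equals \(\psi\), which gives \eqref{f_free_product_matrix_coefficient}.
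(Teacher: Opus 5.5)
The gap is in arranging \(\psi(u)=0\). Your Steps~1--2 and the closing basis construction are sound; they amount to the paper's observation that matrix coefficients of finite-dimensional representations restrict onto the whole dual of a finite-dimensional subspace, and that sums of matrix coefficients are single coefficients of direct sums.

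For the patch you need a finite-dimensional representation \(\sigma\) and a vector \(w\) with \(w\notin\sigma(E)w\). Your only justification is ``enlarge \(V_0\) by summands until it holds'', with no reason this ever happens. This is exactly where the hypothesis \(E\subseteq C_0\) has to be used, and your argument never uses it. Yet the lemma is false without it: if \(1\in E\) and \(\lambda(1)\neq0\), then \(v^*(\pi(1)u)=v^*(u)=0\) for distinct basis vectors \(u,v\).

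With \(\pi_0\) merely injective on \(E\), the patch can fail for every amplification. Take \(C=\Bbbk\times\Bbbk\), \(C_0=E=\Bbbk(1,0)\), and \(\pi_0\) the first projection. Then \(\pi_0(1,0)=\mathrm{id}\), so \(w\in\pi_0^{\oplus m}(E)w\) for all \(w\) and \(m\). The patch is genuinely needed here, since \(\psi(u)=\lambda(1,0)\neq0\). The remarks about a ``one-dimensional summand given by \(w\)'' and about \(C\) having no character give no construction. The dimension count only produces some \(\phi\neq0\) annihilating \(\sigma(E)w\), not one with \(\phi(w)\neq0\).

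The missing idea is to bring the unit in from the start, as the paper does. Run your Steps~1--2 on the finite-dimensional subspace \(W:=\Bbbk1\oplus E\); RFD still detects every nonzero element of \(W\). Use the functional \(\tilde\lambda\) that extends \(\lambda\) by \(\tilde\lambda(1):=0\). You obtain \(\pi,u,\psi\) with \(\psi(\pi(y)u)=\tilde\lambda(y)\) for all \(y\in W\), hence \(\psi(u)=\psi(\pi(1)u)=\tilde\lambda(1)=0\) automatically. Then \(u\neq0\) and \(\psi\neq0\) because \(\lambda\neq0\), and your last paragraph finishes the proof.

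If you prefer to keep the patching approach, proceed as follows. Choose \(\sigma\) injective on \(\Bbbk1\oplus E\), let \(m:=\dim\sigma\), and take \(w\in\sigma^{\oplus m}\) to be the tuple of a basis. Then \(\sigma^{\oplus m}(x)w=w\) would force \(\sigma(x-1)=0\), which is impossible since \(x-1\neq0\) for \(x\in E\subseteq C_0\). So \(w\notin\sigma^{\oplus m}(E)w\), and a suitable \(\phi\) with \(\phi(w)=-\psi(u)\) exists. Either way, \(E\cap\Bbbk1=0\) is the input your proposal was missing.
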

\begin{proof}
    The matrix coefficients \(x\mapsto\varphi(\pi(x)u)\), where \(\pi\colon C\to\operatorname{End}_{\Bbbk}(V)\) runs over the finite-dimensional representations of \(C\) and \(u\in V\), \(\varphi\in V^*\), form a linear subspace \(M\subseteq C^*\), and \(M\) separates the elements of \(C\). Write \(W:=\Bbbk 1\oplus E\), a finite-dimensional subspace of \(C\). The restrictions to \(W\) of the elements of \(M\) form a linear subspace of \(W^*\) separating the elements of \(W\). A proper subspace of \(W^*\) would have a nonzero common kernel in \(W\), hence this subspace is all of \(W^*\). Extend \(\lambda\) to \(W\) by \(\lambda(1):=0\) and choose \(\pi\), \(u\), \(\varphi\) with \(\varphi(\pi(y)u)=\lambda(y)\) for every \(y\in W\). Then \(\varphi(\pi(x)u)=\lambda(x)\) for \(x\in E\), while \(\varphi(u)=\varphi(\pi(1)u)=\lambda(1)=0\). Since \(\lambda\neq0\), both \(u\) and \(\varphi\) are nonzero. Choose a basis \(\gamma\) of \(\ker\varphi\) containing \(u\), and choose \(v\in V\) with \(\varphi(v)=1\). Then \(\beta:=\gamma\sqcup\{v\}\) is a basis of \(V\) containing the distinct vectors \(u,v\), and its coordinate functional \(v^*\) equals \(\varphi\). This proves~\eqref{f_free_product_matrix_coefficient}.
\end{proof}

Lemma~\ref{lemma_matrix_coefficient_separation} realizes a functional on a finite-dimensional subspace of an algebra as a single matrix coefficient. We may depict identity~\eqref{f_free_product_matrix_coefficient} by the arrow
\[
    \begin{tikzcd}
        u \arrow[r, "\lambda", "\pi"'] & v
    \end{tikzcd}
\]
We will need a generalization of Lemma \ref{lemma_matrix_coefficient_separation} suitable for studying traces of finite-dimensional representations of the free product \(A*B\). Giving such a representation is equivalent to giving representations \(\rho_A\colon A\to\operatorname{End}_{\Bbbk}(V)\) and \(\rho_B\colon B\to\operatorname{End}_{\Bbbk}(V)\) on a common finite-dimensional vector space \(V\). Fixing a basis \(\mathcal{B}\) of \(V\), the trace of an alternating word \(a_1b_1\cdots a_rb_r\in A*B\), for \(a_j\in A\) and \(b_j\in B\), has the expansion
\[
    \operatorname{tr}\bigl(\rho_A(a_1)\rho_B(b_1)\cdots\rho_A(a_r)\rho_B(b_r)\bigr)
    =\sum_{e_1,\ldots,e_{2r}\in\mathcal{B}}
    \prod_{j=1}^{r}
    e_{2j-2}^*\bigl(\rho_A(a_j)e_{2j-1}\bigr)\,
    e_{2j-1}^*\bigl(\rho_B(b_j)e_{2j}\bigr),
    \qquad e_0:=e_{2r},
\]
where \(e_i^*\) denotes the coordinate functional dual to \(e_i\). Thus each summand can be represented as a closed chain of matrix coefficients. The right analog of identity \eqref{f_free_product_matrix_coefficient} should be a collection of similar identities, see Lemma \ref{lemma_two_algebra_coefficients_cyclic} below, arranged in a circle:
\[
    \begin{tikzpicture}[
        baseline=(current bounding box.center),
        cycle arrow/.style={draw, line width=rule_thickness, -{tikzcd to}},
        cycle label/.style={
            font=\everymath\expandafter{\the\everymath\scriptstyle},
            inner sep=2pt
        }
    ]
        \def\circleradius{2.42cm}
        \node (e2r) at (90:\circleradius) {$e_{0}=e_{2r}$};
        \node (e2rminusone) at (37:\circleradius) {$e_{2r-1}$};
        \node (e2rminustwo) at (-5.5:\circleradius) {$e_{2r-2}$};
        \node (e2rminusthree) at (-48:\circleradius) {$e_{2r-3}$};
        \node (dots) at (-90.5:\circleradius) {$\cdots$};
        \node (ethree) at (-133:\circleradius) {$e_3$};
        \node (etwo) at (-175.5:\circleradius) {$e_2$};
        \node (eone) at (142:\circleradius) {$e_1$};

        \draw[cycle arrow] (e2r.east) to[bend left=12]
            node[cycle label, sloped, above] {$\mu_r$}
            node[cycle label, sloped, below] {$\rho_B$}
            (e2rminusone);
        \draw[cycle arrow] (e2rminusone) to[bend left=12]
            node[cycle label, sloped, above] {$\lambda_r$}
            node[cycle label, sloped, below] {$\rho_A$}
            (e2rminustwo);
        \draw[cycle arrow] (e2rminustwo) to[bend left=12]
            node[cycle label, sloped, below] {$\mu_{r-1}$}
            node[cycle label, sloped, above] {$\rho_B$}
            (e2rminusthree);
        \draw[cycle arrow] (e2rminusthree) to[bend left=12] (dots);
        \draw[cycle arrow] (dots) to[bend left=12] (ethree);
        \draw[cycle arrow] (ethree) to[bend left=12]
            node[cycle label, sloped, below] {$\lambda_2$}
            node[cycle label, sloped, above] {$\rho_A$}
            (etwo);
        \draw[cycle arrow] (etwo) to[bend left=12]
            node[cycle label, sloped, above] {$\mu_1$}
            node[cycle label, sloped, below] {$\rho_B$}
            (eone);
        \draw[cycle arrow] (eone) to[bend left=12]
            node[cycle label, sloped, above] {$\lambda_1$}
            node[cycle label, sloped, below] {$\rho_A$}
            (e2r.west);
    \end{tikzpicture}
\]

\begin{lemma}\label{lemma_two_algebra_coefficients_cyclic}
    Let \(A\) and \(B\) be RFD \(\Bbbk\)-algebras. Fix \(r\geq1\), finite-dimensional subspaces \(E\subseteq A_0\) and \(F\subseteq B_0\), and nonzero functionals \(\lambda_1,\ldots,\lambda_r\in E^*\) and \(\mu_1,\ldots,\mu_r\in F^*\). There exist representations \(\rho_A\colon A\to\operatorname{End}_{\Bbbk}(V)\) and \(\rho_B\colon B\to\operatorname{End}_{\Bbbk}(V)\) on a common finite-dimensional vector space \(V\) with a basis \(\mathcal{B}\), subsets \(\mathcal{B}_1,\ldots,\mathcal{B}_{2r}\subseteq\mathcal{B}\), and distinct vectors \(e_1,\ldots,e_{2r}\in\mathcal{B}\) such that
    \(
        \mathcal{B}_1,\ldots,\mathcal{B}_{2r},
        \{e_1,\ldots,e_{2r}\}
    \)
    are pairwise disjoint and the following conditions hold:
    \begin{enumerate}[label=\textup{\arabic*)},leftmargin=5ex]
        \item With the cyclic convention \(e_0:=e_{2r}\), define
        \(
            V_i:=\operatorname{span}_{\Bbbk}
            \bigl(\mathcal{B}_i\cup\{e_{i-1},e_i\}\bigr)
        \)
        for \(1\leq i\leq 2r\). Then each \(V_{2j-1}\) is a subrepresentation of \(\rho_A\), and each \(V_{2j}\) is a subrepresentation of \(\rho_B\). Moreover,
        \[
            V_A:=\bigoplus_{j=1}^{r}V_{2j-1}
            \qquad\text{and}\qquad
            V_B:=\bigoplus_{j=1}^{r}V_{2j}
        \]
        are direct-summand subrepresentations of \(\rho_A\) and \(\rho_B\), respectively. They satisfy
        \(
            V_A\cap V_B=\bigoplus_{i=1}^{2r}\Bbbk e_i.
        \)
        \item With \(e_1^*,\ldots,e_{2r}^*\in V^*\) the coordinate functionals of \(\mathcal{B}\) dual to \(e_1,\ldots,e_{2r}\), and with the cyclic convention \(e_0^*:=e_{2r}^*\), one has, for \(1\leq j\leq r\),
        \begin{equation}\label{f_free_product_cyclic_entries}
            \begin{aligned}
                e_{2j-2}^*\bigl(\rho_A(x)e_{2j-1}\bigr)&=\lambda_j(x)\quad(x\in E),\\
                e_{2j-1}^*\bigl(\rho_B(x)e_{2j}\bigr)&=\mu_j(x)\quad(x\in F),
            \end{aligned}
        \end{equation}
        where for \(j=1\) the first equation reads \(e_{2r}^*\bigl(\rho_A(x)e_1\bigr)=\lambda_1(x)\).
    \end{enumerate}
\end{lemma}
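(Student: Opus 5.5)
The plan is to apply Lemma~\ref{lemma_matrix_coefficient_separation} once for each of the \(2r\) functionals and glue the resulting representations along a cycle of identified basis vectors.

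\textbf{Local pieces.} For each \(j\), Lemma~\ref{lemma_matrix_coefficient_separation} applied to \(A\), \(E\) and \(\lambda_j\) gives a representation \(\pi_{2j-1}\colon A\to\operatorname{End}(U_{2j-1})\), a basis \(\beta_{2j-1}\), and distinct \(u_{2j-1},v_{2j-1}\in\beta_{2j-1}\) with \(v_{2j-1}^*(\pi_{2j-1}(x)u_{2j-1})=\lambda_j(x)\) for \(x\in E\). Applied to \(B\), \(F\) and \(\mu_j\), it gives \(\pi_{2j}\colon B\to\operatorname{End}(U_{2j})\), a basis \(\beta_{2j}\), and distinct \(u_{2j},v_{2j}\) with \(v_{2j}^*(\pi_{2j}(x)u_{2j})=\mu_j(x)\) for \(x\in F\). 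Set \(\mathcal{B}_i:=\beta_i\setminus\{u_i,v_i\}\), taken as pairwise disjoint abstract sets.

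\textbf{Gluing.} Let \(V\) have basis \(\mathcal{B}:=\bigsqcup_i\mathcal{B}_i\sqcup\{e_1,\ldots,e_{2r}\}\). Identify \(U_i\) with \(V_i=\operatorname{span}(\mathcal{B}_i\cup\{e_{i-1},e_i\})\) by \(\mathcal{B}_i\to\mathcal{B}_i\), \(u_i\mapsto e_i\), \(v_i\mapsto e_{i-1}\), with \(e_0=e_{2r}\). The required identities~\eqref{f_free_product_cyclic_entries} read \(e_{2j-2}^*(\rho_A(x)e_{2j-1})\) and \(e_{2j-1}^*(\rho_B(x)e_{2j})\), so the piece \(i\) indeed carries \(u_i\) to \(e_i\) and \(v_i\) to \(e_{i-1}\). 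The sets \(V_1,V_3,\ldots,V_{2r-1}\) have disjoint spanning basis subsets: \(V_{2j-1}\) uses \(e_{2j-2}\) and \(e_{2j-1}\), and as \(j\) varies these pairs partition \(\{e_1,\ldots,e_{2r}\}\). Similarly \(V_{2j}\) uses \(e_{2j-1}\) and \(e_{2j}\), again a partition. Hence \(V_A:=\bigoplus_j V_{2j-1}\) and \(V_B:=\bigoplus_j V_{2j}\) are direct sums, and each equals the span of \(\{e_1,\ldots,e_{2r}\}\) together with the corresponding \(\mathcal{B}_i\)'s. Their intersection is \(\bigoplus_i\Bbbk e_i\), because the \(\mathcal{B}_i\) are disjoint from one another and from the \(e\)'s.

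\textbf{Extending to all of \(V\).} Define \(\rho_A\) on \(V_A\) by transporting \(\bigoplus_j\pi_{2j-1}\). Choose the complement \(V_A':=\operatorname{span}(\bigcup_j\mathcal{B}_{2j})\), so that \(V=V_A\oplus V_A'\), and let \(A\) act on \(V_A'\) by any unital representation. The simplest choice is \(\dim V_A'\) copies of a fixed finite-dimensional representation of \(A\), which exists because \(A\neq0\) is RFD. This has to be handled carefully when \(\dim V_A'\) is not a multiple of that representation's dimension. Define \(\rho_B\) symmetrically, using the complement \(\operatorname{span}(\bigcup_j\mathcal{B}_{2j-1})\).

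\textbf{Main obstacle.} The dimension mismatch is the one real difficulty. A unital representation of \(A\) of arbitrary dimension need not exist: for example, \(A=M_2(\Bbbk)\) admits none of odd dimension. I would fix this by padding. Let \(\sigma_A\) and \(\sigma_B\) be fixed finite-dimensional representations of \(A\) and \(B\), of dimensions \(d_A\) and \(d_B\). Adjoin extra basis vectors to the \(\mathcal{B}_i\), that is, enlarge each \(U_i\) by a direct summand of copies of \(\sigma_A\) or \(\sigma_B\); this is harmless for the matrix-coefficient identities because it uses a block-diagonal extension with adapted bases. Then require \(\dim V_A'=\sum_j\dim U_{2j}-2r\) to be a multiple of \(d_A\), and \(\dim V_B'=\sum_j\dim U_{2j-1}-2r\) to be a multiple of \(d_B\). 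Adding \(d_A d_B\)-sized blocks gives enough freedom. Concretely, first add copies of \(\sigma_B\) to \(U_2\) until \(\sum_j\dim U_{2j}\equiv 2r\pmod{d_A}\). This is possible provided \(\gcd(d_A,d_B)\) divides the required shift, and it may not be, so instead enlarge using \(\sigma_A\otimes\)-compatible multiples. The cleanest route is to replace \(\sigma_B\) by \(\sigma_B^{\oplus d_A}\), and replace each \(\pi_i\) by \(\pi_i\oplus\sigma^{\oplus k_i}\) with \(k_i\) chosen so that every \(\dim U_i-2\) becomes divisible by both \(d_A\) and \(d_B\). If this congruence cannot be met, I would instead tensor every piece by \(\Bbbk^{d_Ad_B}\) while keeping only one copy of \(u_i\) and \(v_i\). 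Once the divisibility holds, both \(\rho_A\) and \(\rho_B\) are unital representations on \(V\) with \(V_A\) and \(V_B\) direct-summand subrepresentations. Finally, \(e_{2j-2}^*\circ\rho_A(x)\) restricted to \(e_{2j-1}\) is computed inside \(V_{2j-1}\), since the coordinate functionals of \(\mathcal{B}\) restrict on \(V_{2j-1}\) to those of the transported \(\beta_{2j-1}\). It therefore equals \(v_{2j-1}^*(\pi_{2j-1}(x)u_{2j-1})=\lambda_j(x)\), and the same argument gives \(\mu_j(x)\) for \(\rho_B\).
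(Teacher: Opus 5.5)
You have the right local pieces from Lemma~\ref{lemma_matrix_coefficient_separation} and the right gluing orientation ($u_i\mapsto e_i$, $v_i\mapsto e_{i-1}$). Your final matrix-coefficient computation is also correct, provided $\rho_A$ and $\rho_B$ exist. The gap is the step you flag yourself, and it cannot be closed inside your framework. You take $\mathcal{B}=\bigsqcup_i\mathcal{B}_i\sqcup\{e_1,\ldots,e_{2r}\}$, which forces $V=V_A+V_B$. The complement of $V_A$ is then forced to be $\operatorname{span}\bigl(\bigcup_j\mathcal{B}_{2j}\bigr)$, and this fails for arithmetic reasons, not for lack of clever padding.

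Take $A=B=M_3(\Bbbk)$ and $r=1$. Every finite-dimensional representation of $M_3(\Bbbk)$ has dimension divisible by $3$, so $\dim V_A\equiv\dim V_B\equiv0\pmod 3$. Hence $\dim V=\dim V_A+\dim V_B-2\equiv1\pmod 3$, yet $V$ must carry a unital $A$-action. None of your fallbacks avoids this as long as every basis vector of $V$ is an $e_i$ or lies in some $\mathcal{B}_i$. That covers padding the $U_i$ by copies of $\sigma_A,\sigma_B$, and tensoring the pieces by $\Bbbk^{d_Ad_B}$ with the extra copies absorbed into the $\mathcal{B}_i$. Your intermediate goal, making every $\dim U_i-2$ divisible by $d_A$, already fails for the $A$-pieces here, since $\dim U_{2j-1}\equiv0\pmod 3$.

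The missing idea is that the lemma never requires $V=V_A+V_B$. You should allow basis vectors of $V$ lying in neither $V_A$ nor $V_B$, and choose $\dim V$ to be a dimension of both an $A$-representation and a $B$-representation. The paper does this as follows.

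Set $W_A:=\bigoplus_jU_{2j-1}$, $W_B:=\bigoplus_jU_{2j}$, $d_A:=\dim W_A$, $d_B:=\dim W_B$. Take $V:=W_A^{\oplus d_B}$ with $\rho_A:=\pi_A^{\oplus d_B}$, and place the $e_i$ in the first copy of $W_A$ as you did. Then transport $\pi_B^{\oplus d_A}$, which also has dimension $d_Ad_B$, to $V$ along a basis bijection with these properties:
\begin{enumerate}
\item In the first copy of $W_B$ it sends $v_{2j}\mapsto e_{2j-1}$ and $u_{2j}\mapsto e_{2j}$.
\item It sends the remaining $d_B-2r$ vectors of that first copy to basis vectors of $V$ outside the first copy of $W_A$. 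This is possible because $d_A(d_B-1)\ge d_B-2r$.
\item It matches all other basis vectors arbitrarily.
\end{enumerate}
Now $V_A$ and $V_B$ are the first copies and their invariant complements are the remaining copies. This gives $V_A\cap V_B=\bigoplus_i\Bbbk e_i$ with no divisibility condition at all.
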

\begin{proof}
    For each \(j\in\{1,\ldots,r\}\), Lemma~\ref{lemma_matrix_coefficient_separation} applied to \((E,\lambda_j)\) yields a representation \(\pi_A^j\colon A\to\operatorname{End}_{\Bbbk}(W_A^j)\), a basis \(\beta_A^j\) of \(W_A^j\), and distinct vectors \(u_j^A,v_j^A\in\beta_A^j\) with
    \[
        (v_j^A)^*\bigl(\pi_A^j(x)u_j^A\bigr)=\lambda_j(x)\quad(x\in E),
    \]
    and, applied to \((F,\mu_j)\), a representation \(\pi_B^j\colon B\to\operatorname{End}_{\Bbbk}(W_B^j)\), a basis \(\beta_B^j\) of \(W_B^j\), and distinct vectors \(u_j^B,v_j^B\in\beta_B^j\) with
    \[
        (v_j^B)^*\bigl(\pi_B^j(x)u_j^B\bigr)=\mu_j(x)\quad(x\in F).
    \]
    Put
    \[
        W_A:=\bigoplus_{j=1}^{r}W_A^j,\qquad \pi_A:=\bigoplus_{j=1}^{r}\pi_A^j,\qquad
        W_B:=\bigoplus_{j=1}^{r}W_B^j,\qquad \pi_B:=\bigoplus_{j=1}^{r}\pi_B^j,
    \]
    and, for \(C\in\{A,B\}\), let \(\beta_C:=\bigsqcup_{j=1}^{r}\beta_C^j\) and set \(d_C:=\dim W_C\). Since every \(\beta_C^j\) contains the distinct vectors \(u_j^C,v_j^C\), one has \(d_C\geq2r\). Set \(d:=d_Ad_B\).

    Take
    \[
        V:=W_A^{\oplus d_B},\qquad \rho_A:=\pi_A^{\oplus d_B},
    \]
    a representation of \(A\) of dimension \(d\). Let \(\mathcal{B}\) be the basis of \(V\) formed by the \(d_B\) copies of \(\beta_A\). Identify \(W_A\) with the first summand and \(\beta_A\) with the corresponding subset of \(\mathcal{B}\). Define
    \[
        e_{2j-1}:=u_j^A\quad(1\leq j\leq r),\qquad
        e_{2j-2}:=v_j^A\quad(2\leq j\leq r),\qquad
        e_{2r}:=v_1^A
    \]
    inside this first copy, and adopt the cyclic convention \(e_0:=e_{2r}\).

    The direct sum \(\pi_B^{\oplus d_A}\) is a representation of \(B\) of dimension \(d\). Identify \(W_B\) with the first summand of \(W_B^{\oplus d_A}\) and regard \(\beta_B\) as its basis. Equip every other summand with a copy of \(\beta_B\), and let \(\widehat{\beta}_B\) be the resulting basis of \(W_B^{\oplus d_A}\). Choose a bijection \(\widehat{\beta}_B\to\mathcal{B}\) which, on the distinguished vectors in the first summand, is given by
    \[
        v_j^B\longmapsto e_{2j-1},\qquad u_j^B\longmapsto e_{2j}\qquad(1\leq j\leq r),
    \]
    and which sends the remaining \(d_B-2r\) vectors of the first-copy basis \(\beta_B\) into \(\mathcal{B}\setminus\beta_A\). This is possible because
    \[
        \lvert\mathcal{B}\setminus\beta_A\rvert
        =d_A(d_B-1)\geq d_B-1\geq d_B-2r.
    \]
    After these values have been chosen, match all remaining basis vectors arbitrarily. Let \(\Phi_B\colon W_B^{\oplus d_A}\to V\) be the linear extension of this basis bijection, and set \(\rho_B:=\Phi_B\,\pi_B^{\oplus d_A}\,\Phi_B^{-1}\).

    For \(1\leq j\leq r\), define
    \[
        \begin{aligned}
            \mathcal{B}_{2j-1}&:=\beta_A^j\setminus\{e_{2j-2},e_{2j-1}\},\\
            \mathcal{B}_{2j}&:=\Phi_B\bigl(\beta_B^j\bigr)\setminus\{e_{2j-1},e_{2j}\}.
        \end{aligned}
    \]
    The sets \(\mathcal{B}_1,\ldots,\mathcal{B}_{2r},\{e_1,\ldots,e_{2r}\}\) are pairwise disjoint subsets of \(\mathcal{B}\): this is clear among the odd-indexed sets and among the even-indexed sets, while every odd-indexed \(\mathcal{B}_i\) lies in \(\beta_A\) and every even-indexed \(\mathcal{B}_i\) lies in \(\mathcal{B}\setminus\beta_A\) by the choice of \(\Phi_B\). The definition of the \(V_i\) then gives
    \[
        V_{2j-1}=W_A^j,\qquad V_{2j}=\Phi_B\bigl(W_B^j\bigr).
    \]
    Hence each odd-indexed \(V_i\) is a subrepresentation of \(\rho_A\), each even-indexed \(V_i\) is a subrepresentation of \(\rho_B\), and
    \[
        V_A=W_A,\qquad V_B=\Phi_B(W_B),
    \]
    where \(W_A\) and \(W_B\) are the first summands of \(W_A^{\oplus d_B}\) and \(W_B^{\oplus d_A}\), respectively. Their remaining summands provide invariant complements.

    Finally, fix \(j\in\{1,\ldots,r\}\). For \(x\in E\), the operator \(\rho_A(x)\) preserves the block \(W_A^j\) of the first copy, on which \(e_{2j-2}^*\) restricts to the coordinate functional \((v_j^A)^*\) of \(\beta_A^j\). Hence
    \[
        e_{2j-2}^*\bigl(\rho_A(x)e_{2j-1}\bigr)=(v_j^A)^*\bigl(\pi_A^j(x)u_j^A\bigr)=\lambda_j(x).
    \]
    For \(x\in F\), as \(\Phi_B^{-1}(e_{2j})=u_j^B\), one has \(\rho_B(x)e_{2j}=\Phi_B\bigl(\pi_B^{\oplus d_A}(x)u_j^B\bigr)\). The functional \(e_{2j-1}^*\Phi_B\) is the coordinate functional of \(\widehat{\beta}_B\) dual to \(v_j^B\), because \(\Phi_B(v_j^B)=e_{2j-1}\). On the block \(W_B^j\), it restricts to \((v_j^B)^*\). Therefore
    \[
        e_{2j-1}^*\bigl(\rho_B(x)e_{2j}\bigr)=(v_j^B)^*\bigl(\pi_B^j(x)u_j^B\bigr)=\mu_j(x),
    \]
    which gives the two identities of~\eqref{f_free_product_cyclic_entries}.
\end{proof}

\begin{proposition}\label{prop_cyclic_trace_detection}
    Let \(A\) and \(B\) be RFD \(\Bbbk\)-algebras, fix \(r\geq1\), and let \(h\in A*B\) have length at most \(2r\).
    Suppose the length-\(2r\) component of \(h\) is an element \(w\in(A_0\otimes B_0)^{\otimes r}\) whose image in \(\bigl((A_0\otimes B_0)^{\otimes r}\bigr)_{C_r}\) is nonzero.
    Then there exists a finite-dimensional representation \(\rho\colon A*B\to\operatorname{End}_{\Bbbk}(V)\) such that \(\operatorname{tr}\rho(h)\neq0\).
\end{proposition}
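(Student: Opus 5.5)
The plan is to reduce the nonvanishing of the coinvariant class of \(w\) to a single product functional, realize that functional as a closed chain of matrix coefficients by Lemma~\ref{lemma_two_algebra_coefficients_cyclic}, and then use a one-parameter deformation of \(\rho_B\) to isolate the contribution of that chain inside \(\operatorname{tr}\rho(h)\).

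\textbf{Step 1: choose the functionals.} Expand \(h\) in reduced words and let \(E\subseteq A_0\) and \(F\subseteq B_0\) be the finite-dimensional spans of all \(A_0\)- and \(B_0\)-letters that occur. Then \(w\in(E\otimes F)^{\otimes r}\), and its image in \(\bigl((E\otimes F)^{\otimes r}\bigr)_{C_r}\) is still nonzero, because \((E\otimes F)^{\otimes r}\) is a \(C_r\)-stable direct summand of \((A_0\otimes B_0)^{\otimes r}\). Since \(\operatorname{char}\Bbbk=0\), the dual of the coinvariants is the space of \(C_r\)-invariant functionals. These invariants are spanned by cyclic averages of product functionals \(\lambda_1\otimes\mu_1\otimes\cdots\otimes\lambda_r\otimes\mu_r\) with \(\lambda_j\in E^*\) and \(\mu_j\in F^*\). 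Hence there is such a product functional \(\phi\) with
\[
\sum_{k\in C_r}\phi(\sigma^k w)\neq 0 .
\]
We may take all \(\lambda_j,\mu_j\) nonzero, since zero factors contribute nothing.

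\textbf{Step 2: build the representation and expand the trace.} Apply Lemma~\ref{lemma_two_algebra_coefficients_cyclic} to \(E,F,\lambda_j,\mu_j\) to obtain \(\rho_A\), \(\rho_B\), the basis \(\mathcal{B}\) and the cycle \(e_1,\ldots,e_{2r}\). Expanding \(\operatorname{tr}\rho(u)\) for each reduced word \(u\) of \(h\) as a sum over closed walks in \(\mathcal{B}\), every \(A\)-step stays inside one \(\rho_A\)-invariant block (one of the \(V_{2j-1}\) or a complement summand), and similarly for \(B\)-steps. A walk of length \(2r\) that passes through the distinguished vertices in the cyclic order \(e_{2r}\to e_{2r-1}\to\cdots\to e_1\to e_{2r}\) contributes exactly \(\phi\) applied to a cyclic rotation of \(u\). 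Summing over the \(r\) starting positions then gives \(\sum_k\phi(\sigma^k w)\), and words beginning in \(B_0\) are handled by rotating them.

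\textbf{Step 3: isolate the winding walks.} All other walks must be suppressed, including those in shorter words, in the complements, and those that backtrack. To do this, replace \(\rho_B\) by \(D_t\rho_B D_t^{-1}\) with \(D_t\) diagonal in \(\mathcal{B}\), \(t\) a formal parameter (specialized to a scalar at the end). Both are still representations, so \(\rho_t\) is a representation of \(A*B\), and \(\operatorname{tr}\rho_t(h)\) is a Laurent polynomial in \(t\). A closed walk picks up \(t^{\sum(\omega(\text{start})-\omega(\text{end}))}\) over its \(B\)-steps only, where \(\omega\) is the weight function defining \(D_t\). The weights must be chosen so that each \(B\)-step \(e_{2j}\to e_{2j-1}\) along the cycle gains \(+1\), while the vertices of each \(V_{2j-1}\) share a weight up to the shift forced by the cycle.

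This is the step I expect to be the main obstacle. The cycle cannot be consistently graded, since the weights cannot increase all the way around. So I would instead pass to a \(\mathbb{Z}/N\)-covering: take \(N\gg r\) copies of the construction, and let the \(B\)-edge \(e_{2r}\to e_{2r-1}\) connect copy \(k\) to copy \(k+1\) by reindexing the bijection \(\Phi_B\). Then only walks that wind around the big cycle contribute to the top power of \(t\), or alternatively the trace picks up a primitive \(N\)-th root phase that is averaged out. Its coefficient is a nonzero multiple of \(\sum_k\phi(\sigma^k w)\).

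\textbf{Step 4: conclude.} A Laurent polynomial in \(t\) with a nonzero coefficient does not vanish at some \(t\in\Bbbk\), because \(\Bbbk\) is infinite. At that value \(\operatorname{tr}\rho_t(h)\neq0\). If the covering trick fails, I would instead use independent scalings \(t_1,\ldots,t_{2r}\) on the blocks \(V_i\) and extract the coefficient of \(t_1\cdots t_{2r}\). The point is that the disjointness statements in Lemma~\ref{lemma_two_algebra_coefficients_cyclic}(1) ensure each block is visited by the winding walk exactly once.
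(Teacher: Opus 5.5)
Your Steps 1 and 2 agree with the paper. Step 3, where the proposition is actually proved, is left open, and neither mechanism you offer works as described.

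In the \(\mathbb{Z}/N\)-covering, the trace counts only those closed walks of the base whose lift closes up. The winding walk crosses the reindexed edge once, so it lifts to a path from sheet \(k\) to sheet \(k+1\). For \(N\geq2\), therefore, exactly the walks carrying \(\sum_k\phi(\sigma^kw)\) drop out: they are destroyed rather than isolated.

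The fallback of independent scalings on the blocks \(V_i\) also fails if taken literally. A diagonal matrix that is scalar on each block of the block decomposition of \(\rho_A\) commutes with \(\rho_A\), so conjugating \(\rho_A\) by it changes nothing. The mixed choice, conjugating \(\rho_B\) by the scalar \(t_{2j-1}\) on \(V_{2j-1}\), multiplies the winding entry \((e_{2j-1},e_{2j})\) by \(t_{2j-1}/t_{2j+1}\), and these ratios telescope to \(1\).

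You also never show that shorter words, backtracking walks, or walks through the complementary summands miss the extracted coefficient. This is not automatic: the complements produced by Lemma~\ref{lemma_two_algebra_coefficients_cyclic} can link different \(V_i\) off the distinguished cycle. For example, vectors of \(\mathcal{B}_{2i}\) and \(\mathcal{B}_{2j}\) may lie in the same copy of \(W_A\).

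The missing idea is to scale individual distinguished vectors, with different diagonal matrices for the two factors. The paper conjugates \(\rho_A\) by \(g_A(t)\), which multiplies only \(e_{2j-2}\) by \(t_{2j-1}\), and \(\rho_B\) by \(g_B(t)\), which multiplies only \(e_{2j-1}\) by \(t_{2j}\). Each matrix-entry factor \((z_{p-1},z_p)\) then carries at most one variable upstairs and at most one downstairs. Hence every monomial coming from a word of length \(k\) has total degree at most \(k\le 2r\).

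Reaching \(t_1\cdots t_{2r}\) therefore forces three things: \(k=2r\), no denominators, and \(z_{p-1}=e_{i_p-1}\) with \(i_p\equiv p\pmod 2\) for every \(p\). Invariance of \(V_q\) under \(\rho_A\) (odd \(q\)) or \(\rho_B\) (even \(q\)) then forces \(i_{p+1}\equiv i_p+1\pmod{2r}\). So the only surviving walks are the \(r\) rotations of the winding walk, and the coefficient is exactly \(\sum_k\phi(\sigma^kw)\neq0\). This single degree-plus-invariance argument disposes of all the unwanted walks at once, including those through the complements, because it forces every vertex of a contributing walk to be some \(e_i\).

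The obstruction you anticipated does not actually exist. Setting \(t_1=\cdots=t_{2r}=t\) above gives a one-parameter family whose \(t^{2r}\)-coefficient is the same nonzero quantity. No grading consistent around the cycle is needed, because the \(A\)-letters and \(B\)-letters are conjugated by different diagonal matrices.
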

\begin{proof}
    Choose finite-dimensional subspaces \(E\subseteq A_0\) and \(F\subseteq B_0\) such that \(w\in W:=(E\otimes F)^{\otimes r}\).
    The inclusion \(W\hookrightarrow(A_0\otimes B_0)^{\otimes r}\) is \(C_r\)-equivariant and therefore induces a map on coinvariants that sends the class of \(w\) in \(W_{C_r}\) to its given nonzero class, so the class of \(w\) in \(W_{C_r}\) is nonzero.
    Hence there is an invariant functional \(\Omega\in(W^*)^{C_r}\) such that \(\Omega(w)\neq0\).
    Here we identified \((W_{C_r})^*\cong(W^*)^{C_r}\).
    The vector space \(W^*\) is spanned by the decomposable functionals
    \[
        (\lambda_1\otimes\mu_1)\otimes\cdots\otimes(\lambda_r\otimes\mu_r),
        \qquad \lambda_j\in E^*,\quad \mu_j\in F^*.
    \]
    The space of invariant functionals \((W^*)^{C_r}\) is spanned by cyclic symmetrizations of decomposable functionals:
    \[
        \sum_{s=0}^{r-1}(\lambda_{1+s}\otimes\mu_{1+s})\otimes\cdots\otimes(\lambda_{r+s}\otimes\mu_{r+s})
        \qquad(\text{indices mod }r).
    \]
    Since \(\Omega\) does not vanish on \(w\), at least one of these symmetrizations does not vanish on \(w\); fix one and denote it by \(\Lambda\).
    Write \(\lambda_1,\ldots,\lambda_r\in E^*\) and \(\mu_1,\ldots,\mu_r\in F^*\) for the functionals occurring in this chosen \(\Lambda\).
    They are all nonzero, since otherwise \(\Lambda\) would vanish identically.

    Apply Lemma~\ref{lemma_two_algebra_coefficients_cyclic} to \(E,F\) and these functionals, obtaining representations \(\rho_A\) and \(\rho_B\) on a common finite-dimensional vector space \(V=\Bbbk\mathcal{B}\), together with \(\mathcal{B}_1,\ldots,\mathcal{B}_{2r}\), \(V_1,\ldots,V_{2r}\), and \(e_1,\ldots,e_{2r}\) satisfying properties~(1) and~(2) of that lemma.
    Recall the cyclic convention \(e_0:=e_{2r}\).
    For \(t=(t_1,\ldots,t_{2r})\in(\Bbbk^\times)^{2r}\), define diagonal matrices \(g_A(t),g_B(t)\in\operatorname{GL}_{\Bbbk}(V)\) in the basis \(\mathcal{B}\) by
    \[
        \begin{aligned}
            g_A(t)(v)&:=
            \begin{cases}
                t_{2j-1}v,& v=e_{2j-2}\text{ for some }1\leq j\leq r,\\
                v,& \text{otherwise},
            \end{cases}\\
            g_B(t)(v)&:=
            \begin{cases}
                t_{2j}v,& v=e_{2j-1}\text{ for some }1\leq j\leq r,\\
                v,& \text{otherwise},
            \end{cases}
        \end{aligned}
        \qquad(v\in\mathcal{B}).
    \]
    The conjugated representations of \(A\) and \(B\) define a representation \(\rho_t\colon A*B\to\operatorname{End}_{\Bbbk}(V)\) by
    \[
        \rho_t(a)=g_A(t)\,\rho_A(a)\,g_A(t)^{-1}\quad(a\in A),\qquad \rho_t(b)=g_B(t)\,\rho_B(b)\,g_B(t)^{-1}\quad(b\in B).
    \]
    Set \(Q(t):=\operatorname{tr}\rho_t(h)\), a Laurent polynomial in \(t_1,\ldots,t_{2r}\).
    We now use property~(1) of Lemma~\ref{lemma_two_algebra_coefficients_cyclic} to extract the coefficient of \(t_1\cdots t_{2r}\) in \(Q\), and property~(2) to prove that this coefficient is nonzero.

    Expand \(h\) by~\eqref{f_free_product_normal_form}.
    The scalar component contributes only a constant to \(Q\) and hence does not contribute to the coefficient of \(t_1\cdots t_{2r}\).
    Every positive-length homogeneous component is a finite sum of reduced words \(y_1\cdots y_k\) with \(1\leq k\leq2r\), and by linearity it suffices to consider one such word.

    Expanding into matrix entries, one has
    \begin{equation}
        \operatorname{tr}\bigl(\rho_t(y_1\cdots y_k)\bigr)
        =\sum_{z_0,\ldots,z_{k-1}\in\mathcal{B}}\;\prod_{p=1}^{k}\bigl(\rho_t(y_p)\bigr)_{z_{p-1},z_p},
        \qquad z_k:=z_0.
    \end{equation}
    Since \(g_A(t)\) and \(g_B(t)\) are diagonal in \(\mathcal{B}\), conjugation multiplies the \((z_{p-1},z_p)\)-entry of \(\rho_A(y_p)\) or \(\rho_B(y_p)\), according as \(y_p\in A_0\) or \(y_p\in B_0\), by the ratio of the two corresponding diagonal entries, each of which is either \(1\) or one of \(t_1,\ldots,t_{2r}\). Thus each of the \(k\) factors supplies at most one numerator variable and at most one denominator variable, which means that the total degree is not greater than \(k\leq 2r\). Here, by the total degree of a Laurent monomial, we mean the sum of the exponents, which may be negative.
    The monomial \(t_1\cdots t_{2r}\) has total degree \(2r\), so a summand indexed by \((z_0,\ldots,z_{k-1})\) can contribute to its coefficient only if \(k=2r\), every factor supplies a numerator, and no factor supplies a denominator.
    Every word of length \(2r\) occurring in \(h\) belongs to the expansion of \(w\in W\), so its letters satisfy \(y_p\in E\) for odd \(p\) and \(y_p\in F\) for even \(p\).
    For odd \(p\), the \(p\)-th factor comes from \(\rho_A(y_p)\) and supplies a numerator variable only when \(z_{p-1}=e_{i-1}\) for some odd \(i\), in which case that variable is \(t_i\). For even \(p\), the analogous statement for \(\rho_B(y_p)\) holds with \(i\) even. Thus, for each \(p\), there is a unique \(i_p\in\{1,\ldots,2r\}\) such that
    \[
        z_{p-1}=e_{i_p-1},\qquad i_p\equiv p\pmod2\qquad(1\leq p\leq 2r).
    \]

    Next, we show that for every tuple \((z_0,\ldots,z_{2r-1})\) indexing a nonzero summand that contributes to the coefficient of \(t_1\cdots t_{2r}\), the associated indices satisfy \(i_{p+1}\equiv i_p+1\pmod{2r}\) for \(1\leq p\leq2r\), where \(i_{2r+1}:=i_1\).
    Fix \(p\), set \(\sigma_p:=\rho_A\) if \(p\) is odd and \(\sigma_p:=\rho_B\) if \(p\) is even, and consider the entry \(\bigl(\sigma_p(y_p)\bigr)_{e_{i_p-1},\,e_{i_{p+1}-1}}\).
    Temporarily set \(q:=i_{p+1}-1\pmod{2r}\) so that \(q\in\{1,\ldots,2r\}\).
    The relation \(i_{p+1}\equiv p+1\pmod2\) makes \(q\) have the same parity as \(p\), so \(V_q\) is \(\sigma_p\)-invariant by Lemma~\ref{lemma_two_algebra_coefficients_cyclic}.
    Hence
    \[
        \sigma_p(y_p)e_{i_{p+1}-1}\in V_q
        =\operatorname{span}_{\Bbbk}\bigl(\mathcal{B}_q\cup\{e_{q-1},e_q\}\bigr).
    \]
    The coordinate functional \(e_{i_p-1}^*\) vanishes on this subspace unless \(e_{i_p-1}\) is one of its two distinguished basis vectors.
    Thus the entry \(\bigl(\sigma_p(y_p)\bigr)_{e_{i_p-1},\,e_{i_{p+1}-1}}\) can be nonzero only if \(i_{p+1}\equiv i_p\) or \(i_p+1\pmod{2r}\).
    Since \(i_p\equiv p\pmod2\) and \(i_{p+1}\equiv p+1\pmod2\), the indices \(i_p\) and \(i_{p+1}\) have opposite parity, so the first possibility is excluded, and the claim follows.
    Consequently \(i_p\equiv i_1+p-1\pmod{2r}\) for \(1\leq p\leq2r\), so each index \(1,\ldots,2r\) occurs exactly once among \(i_1,\ldots,i_{2r}\).
    Since the factor at position \(p\) contributes the numerator \(t_{i_p}\) and no factor contributes a denominator, the Laurent monomial multiplying this matrix-entry product is \(\prod_{p=1}^{2r}t_{i_p}=t_1\cdots t_{2r}\).
    Moreover \(z_p=e_{i_{p+1}-1}=e_{i_p}\), so the entry at position \(p\) is \(e_{i_p-1}^*\bigl(\rho_A(y_p)e_{i_p}\bigr)\) for odd \(p\) and \(e_{i_p-1}^*\bigl(\rho_B(y_p)e_{i_p}\bigr)\) for even \(p\).

    The condition \(i_p\equiv p\pmod2\) makes \(i_1\) odd. In the first sum below, \(i_1\) ranges over the odd elements of \(\{1,\ldots,2r\}\), and \(i_p\equiv i_1+p-1\pmod{2r}\). Writing \(i_1=1+2s\), we obtain
    \[
        \begin{aligned}
            [t_1\cdots t_{2r}]\operatorname{tr}\bigl(\rho_t(y_1\cdots y_{2r})\bigr)
            ={}&\sum_{\substack{1\leq i_1\leq2r\\i_1\text{ odd}}}
            e_{i_1-1}^*\bigl(\rho_A(y_1)e_{i_1}\bigr)
            e_{i_2-1}^*\bigl(\rho_B(y_2)e_{i_2}\bigr)\cdots\\
            &\qquad\qquad\qquad\qquad
            e_{i_{2r-1}-1}^*\bigl(\rho_A(y_{2r-1})e_{i_{2r-1}}\bigr)
            e_{i_{2r}-1}^*\bigl(\rho_B(y_{2r})e_{i_{2r}}\bigr)\\
            ={}&\sum_{s=0}^{r-1}
            e_{2s}^*\bigl(\rho_A(y_1)e_{2s+1}\bigr)
            e_{2s+1}^*\bigl(\rho_B(y_2)e_{2s+2}\bigr)\cdots\\
            &\qquad\qquad\qquad\qquad
            e_{2(r+s)-2}^*\bigl(\rho_A(y_{2r-1})e_{2(r+s)-1}\bigr)
            e_{2(r+s)-1}^*\bigl(\rho_B(y_{2r})e_{2(r+s)}\bigr),
        \end{aligned}
    \]
    where the subscripts of the \(e_i\) are read modulo \(2r\).

    By property~(2) of Lemma~\ref{lemma_two_algebra_coefficients_cyclic}, the summand indexed by \(s\) equals
    \[
        \prod_{c=1}^{r}\lambda_{c+s}(y_{2c-1})\,\mu_{c+s}(y_{2c})
        =\Bigl(\bigotimes_{c=1}^{r}\bigl(\lambda_{c+s}\otimes\mu_{c+s}\bigr)\Bigr)(y_1\otimes\cdots\otimes y_{2r}),
    \]
    where the subscripts of \(\lambda_j\) and \(\mu_j\) are read modulo \(r\). Summing over \(s=0,\ldots,r-1\) gives \(\Lambda(y_1\otimes\cdots\otimes y_{2r})\), and by multilinearity the coefficient of \(t_1\cdots t_{2r}\) in \(Q\) equals \(\Lambda(w)\neq0\).
    Since \(\Bbbk\) has characteristic zero, it is infinite, so the nonzero Laurent polynomial \(Q\) does not vanish identically on \((\Bbbk^\times)^{2r}\).
    Choose \(t\in(\Bbbk^\times)^{2r}\) such that \(Q(t)\neq0\).
    Then \(\rho_t\) is the required representation, since \(\operatorname{tr}\rho_t(h)=Q(t)\neq0\).
\end{proof}

\subsection{Proofs of the main results}

\begin{proof}[Proof of Theorem~\ref{thm_trace_radical_free_product}]
    Since \(A\) and \(B\) are RFD, they admit finite-dimensional representations, so their unit classes are nonzero in their cocenters.
    Proposition~\ref{prop_free_product_cocenter_decomposition} therefore shows that \(\iota_A\) and \(\iota_B\) are injective and the intersection of their images is the one-dimensional vector space spanned by \(\overline{1_{A*B}}\).
    Every finite-dimensional representation of \(A*B\) restricts to finite-dimensional representations of \(A\) and \(B\), so the right-hand side of the asserted equality is contained in \(\operatorname{Rad}_{\mathrm{tr}}(A*B)\).
    Since no nonzero multiple of a unit class belongs to a trace radical, this sum is direct.

    Let \(\xi\in\operatorname{Rad}_{\mathrm{tr}}(A*B)\), and write \(\xi\) according to the direct-sum decomposition in Proposition~\ref{prop_free_product_cocenter_decomposition}.
    Suppose that the component of \(\xi\) in some cyclic-coinvariant summand is nonzero, and let \(r\geq1\) be the largest index for which its component in \(\bigl((A_0\otimes B_0)^{\otimes r}\bigr)_{C_r}\) is nonzero.
    Choose \(a\in A\), \(b\in B\), and tensors \(w_j\in(A_0\otimes B_0)^{\otimes j}\) for \(1\leq j\leq r\) representing the corresponding components of \(\xi\), and set \(h:=a+b+w_1+\cdots+w_r\in A*B\).
    Then \(\overline h=\xi\), the element \(h\) has length at most \(2r\), and its length-\(2r\) component is \(w_r\), whose image in \(\bigl((A_0\otimes B_0)^{\otimes r}\bigr)_{C_r}\) is nonzero.
    Proposition~\ref{prop_cyclic_trace_detection} therefore gives a finite-dimensional representation \(\rho\) of \(A*B\) such that \(\operatorname{tr}\rho(h)\neq0\), contradicting \(\xi\in\operatorname{Rad}_{\mathrm{tr}}(A*B)\).
    Therefore every cyclic-coinvariant component of \(\xi\) vanishes, and hence \(\xi=\iota_A(\overline a)+\iota_B(\overline b)\) for some \(a\in A\) and \(b\in B\).

    Fix a representation \(\beta_0\colon B\to\operatorname{End}_{\Bbbk}(W)\) and set \(q:=\dim W\).
    For any representation \(\alpha\colon A\to\operatorname{End}_{\Bbbk}(U)\), the actions \(x\mapsto\alpha(x)\otimes\operatorname{id}_W\) of \(A\) and \(y\mapsto\operatorname{id}_U\otimes\beta_0(y)\) of \(B\) define a representation of \(A*B\) on \(U\otimes W\).
    Since \(\xi\) belongs to the trace radical, one has
    \[
        q\operatorname{tr}\alpha(a)
        +(\dim U)\operatorname{tr}\beta_0(b)
        =0.
    \]
    Set \(c:=-q^{-1}\operatorname{tr}\beta_0(b)\).
    Then \(\operatorname{tr}\alpha(a-c1_A)=0\) for every \(\alpha\), and hence \(\overline{a-c1_A}\in\operatorname{Rad}_{\mathrm{tr}}(A)\).
    Fix a representation \(\alpha_0\colon A\to\operatorname{End}_{\Bbbk}(U_0)\), set \(p:=\dim U_0\), and let \(\beta\colon B\to\operatorname{End}_{\Bbbk}(W')\) be arbitrary.
    The preceding identity gives \(\operatorname{tr}\alpha_0(a)=pc\), while the corresponding representation of \(A*B\) on \(U_0\otimes W'\) gives
    \[
        0
        =(\dim W')\operatorname{tr}\alpha_0(a)
        +p\operatorname{tr}\beta(b)
        =p\operatorname{tr}\beta(b+c1_B).
    \]
    Thus \(\overline{b+c1_B}\in\operatorname{Rad}_{\mathrm{tr}}(B)\).
    Since \(\iota_A(\overline{1_A})=\iota_B(\overline{1_B})\), we conclude that
    \[
        \xi
        =\iota_A\bigl(\overline{a-c1_A}\bigr)
        +\iota_B\bigl(\overline{b+c1_B}\bigr),
    \]
    which belongs to the right-hand side of the asserted equality.
\end{proof}

\begin{proof}[Proof of Corollary~\ref{cor_finite_dimensional_free_products}]
    Every finite-dimensional algebra is RFD by its left regular representation.
    Since \(A_i\) is finite-dimensional, its Jacobson radical \(J(A_i)\) is nilpotent.
    Thus every element of \(J(A_i)\) acts nilpotently in every finite-dimensional representation of \(A_i\), and hence
    \(
        \overline{J(A_i)}
        \subseteq
        \operatorname{Rad}_{\mathrm{tr}}(A_i).
    \)

    Conversely, set \(S_i:=A_i/J(A_i)\).
    The kernel of the induced map \((A_i)_{\natural}\to(S_i)_{\natural}\) is \(\overline{J(A_i)}\).
    Since \(\Bbbk\) is algebraically closed, the Artin--Wedderburn theorem shows that \(S_i\) is a finite product of full matrix algebras over \(\Bbbk\).
    The ordinary traces on the matrix factors separate \((S_i)_{\natural}\).
    Therefore every class in \((A_i)_{\natural}\) not belonging to \(\overline{J(A_i)}\) is detected by the trace of a finite-dimensional representation of \(A_i\), and hence
    \(
        \operatorname{Rad}_{\mathrm{tr}}(A_i)
        =
        \overline{J(A_i)}.
    \)

    By repeated applications of \cite[Theorem~2]{lichtman1983residual}, all partial free products are RFD, and repeated applications of Theorem~\ref{thm_trace_radical_free_product} now give the desired result.
\end{proof}

We conclude with an application to group algebras. See \cite[Proposition~3.1]{BassLubotzky1983} for a companion statement concerning individual group elements, relating equality of all finite-dimensional character values to conjugacy in every finite quotient.

\begin{proposition}\label{prop_group_algebra}
    Let \(G\) be a group.
    \begin{enumerate}[label=\textup{(\roman*)},leftmargin=5ex]
        \item If \(G\) is residually finite, then \(\Bbbk[G]\) is RFD.
        \item If \(\Bbbk\) is algebraically closed and \(G\) is conjugacy separable, then \(\Bbbk[G]\) is trace-RFD.
    \end{enumerate}
\end{proposition}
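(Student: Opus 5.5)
For (i), I will use the finite quotients of \(G\). Let \(x=\sum_g x_g g\in\Bbbk[G]\) be nonzero with finite support \(S\). Residual finiteness gives, for each pair \(g\neq g'\) in \(S\), a finite quotient in which \(g\) and \(g'\) have distinct images. Intersecting these finitely many finite-index normal subgroups yields a single finite-index normal subgroup \(N\) such that \(S\to G/N\) is injective. The regular representation of the finite group \(G/N\) on \(\Bbbk[G/N]\) then gives a finite-dimensional representation of \(\Bbbk[G]\). This representation sends \(x\) to an operator that maps the basis vector \(1\) to \(\sum_g x_g\,gN\), and this vector is nonzero because the cosets \(gN\), \(g\in S\), are distinct. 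Hence \(\Bbbk[G]\) is RFD.

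For (ii), I first note that conjugacy separability implies residual finiteness: separate \(g\neq1\) from \(1\), which is conjugate only to itself. So (i) gives RFD, and it remains to prove trace separation. The cocenter \(\Bbbk[G]_\natural\) has a basis indexed by the conjugacy classes of \(G\), since \([\Bbbk G,\Bbbk G]\) is spanned by the differences \(gh-hg\), that is, by \(u-vuv^{-1}\). Take a nonzero class \(\xi=\sum_{C}\xi_C\,\overline{C}\) with finitely many nonzero coefficients, and pick representatives \(g_1,\dots,g_n\) of the distinct classes in its support.

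Conjugacy separability gives, for each pair \(i\neq j\), a finite quotient in which the images of \(g_i\) and \(g_j\) are not conjugate. I will combine these into one finite quotient \(Q=G/N\) by taking \(N\) to be the intersection of the finitely many kernels. Non-conjugacy persists in \(Q\), because conjugacy in \(Q\) would map to conjugacy in each smaller quotient. The images \(\bar g_i\) therefore lie in pairwise distinct conjugacy classes of \(Q\). The map \(\Bbbk[G]_\natural\to\Bbbk[Q]_\natural\) then sends \(\xi\) to \(\sum_i\xi_{C_i}\overline{\bar g_i}\), which is nonzero since distinct classes of \(Q\) are linearly independent in \(\Bbbk[Q]_\natural\).

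Finally, over an algebraically closed field of characteristic zero, Maschke's theorem and Artin--Wedderburn give \(\Bbbk[Q]\cong\prod_k M_{n_k}(\Bbbk)\). The traces of the irreducible representations therefore separate \(\Bbbk[Q]_\natural\), which is the character table statement already mentioned in the introduction. Composing with \(G\to Q\) gives a finite-dimensional representation \(\rho\) of \(\Bbbk[G]\) with \(\operatorname{tr}(\xi)(\rho)\neq0\). The main point needing care is producing a single finite quotient that simultaneously keeps all \(n\) classes distinct, and the intersection argument above handles it.
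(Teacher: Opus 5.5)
Your proof is correct. Part (i) matches the paper's argument; the paper's diagonal map into a product of finite quotients is the same as your intersection of kernels.

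In part (ii) you take a different route. You merge the finitely many pairwise-separating quotients into one finite quotient \(Q=G/N\). You note that non-conjugacy survives in this common refinement, so \(\xi\) already has nonzero image in \(\Bbbk[Q]_{\natural}\). You then use that traces separate the cocenter of \(\Bbbk[Q]\cong\prod_k M_{n_k}(\Bbbk)\).

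The paper never forms a single quotient. It separates each pair \(g_s,g_t\) by one character, possibly coming from a different finite quotient for each pair. It then upgrades pairwise separation to full separation by an interpolation argument. The span of the vectors \(\bigl(\chi(g_1),\ldots,\chi(g_r)\bigr)\) is a unital subalgebra of \(\Bbbk^r\) because \(\chi_{V\otimes W}=\chi_V\chi_W\). A point-separating unital subalgebra of \(\Bbbk^r\) contains the idempotents \(e_s\), hence is all of \(\Bbbk^r\).

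Your version is shorter and reuses the trick from (i), reducing everything to the finite-group case in one step. The paper's version uses only two facts: finite-dimensional characters separate conjugacy classes pairwise, and characters are closed under products. So it would apply to any group whose classes are pairwise separated by finite-dimensional characters, whether or not those characters factor through finite quotients.

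A small plus on your side: you justify that conjugacy separability implies residual finiteness, which the paper only asserts.
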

\begin{proof}
    \emph{(i)} Let \(0\neq a\in\Bbbk[G]\) and write \(a=\sum_{s=1}^{r}c_s g_s\) with \(g_1,\ldots,g_r\) pairwise distinct elements of \(G\) and all \(c_s\neq0\). For \(s\neq t\) one has \(g_s g_t^{-1}\neq1\), so residual finiteness yields a finite quotient of \(G\) in which \(g_s\) and \(g_t\) have distinct images. Let \(q\colon G\to Q\) be the diagonal map into the product of these finitely many quotients, so that \(Q\) is finite and \(q(g_1),\ldots,q(g_r)\) are pairwise distinct. Let \(\rho\colon\Bbbk[G]\to\operatorname{End}_{\Bbbk}(\Bbbk[Q])\) be the left regular representation of \(Q\) precomposed with \(q\). Evaluating at the basis vector \(1_Q\in\Bbbk[Q]\),
    \[
        \rho(a)(1_Q)=\sum_{s=1}^{r}c_s\,q(g_s)\neq0,
    \]
    since the \(q(g_s)\) are distinct basis vectors of \(\Bbbk[Q]\) and the \(c_s\) are nonzero. Hence \(\rho(a)\neq0\), and \(\Bbbk[G]\) is RFD.

    \emph{(ii)} Let \(0\neq\xi\in\Bbbk[G]_{\natural}\). Since the conjugacy classes form a basis of \(\Bbbk[G]_{\natural}\), we may choose a representative \(a=\sum_{s=1}^{r}c_s g_s\in\Bbbk[G]\) with \(g_1,\ldots,g_r\) in pairwise distinct conjugacy classes and all \(c_s\neq0\). Conjugacy separability implies that finite-dimensional characters of \(G\) separate its conjugacy classes: if \(g,h\in G\) are non-conjugate, choose a finite quotient in which their images are non-conjugate. Since the irreducible characters of a finite group form a basis of its class functions, some character of this quotient takes different values on the two images. Pulling this character back along the quotient map gives a finite-dimensional character of \(G\) that distinguishes \(g\) and \(h\).

    Let \(\mathcal{C}\subseteq\Bbbk^r\) be spanned by the vectors \(\bigl(\chi(g_1),\ldots,\chi(g_r)\bigr)\), where \(\chi\) ranges over the finite-dimensional characters of \(G\). The trivial character gives \(\mathbf{1}:=(1,\ldots,1)\in\mathcal{C}\), and the identity \(\chi_{V\otimes W}=\chi_V\chi_W\) shows that \(\mathcal{C}\) is a unital subalgebra of \(\Bbbk^r\). For \(s\neq t\), the preceding paragraph gives \(f^{s,t}=(f_1^{s,t},\ldots,f_r^{s,t})\in\mathcal{C}\) with \(f_s^{s,t}\neq f_t^{s,t}\). Hence \(\mathcal{C}\) contains
    \[
        u^{s,t}:=
        \frac{f^{s,t}-f_t^{s,t}\mathbf{1}}
        {f_s^{s,t}-f_t^{s,t}},
    \]
    whose \(s\)-th entry is \(1\) and whose \(t\)-th entry is \(0\). For each \(s\), the product \(e_s:=\prod_{t\neq s}u^{s,t}\) belongs to \(\mathcal{C}\); its \(s\)-th entry is \(1\), while its \(t\)-th entry is \(0\) for every \(t\neq s\). Thus \(e_1,\ldots,e_r\) are the standard basis of \(\Bbbk^r\), so \(\mathcal{C}=\Bbbk^r\). The linear functional \((z_1,\ldots,z_r)\mapsto\sum_{s=1}^{r}c_s z_s\) is nonzero. Consequently, it is nonzero on a linear combination of the vectors spanning \(\Bbbk^r=\mathcal{C}\), and hence on \(\bigl(\chi(g_1),\ldots,\chi(g_r)\bigr)\) for some finite-dimensional character \(\chi\) of \(G\), which means that \(\chi(\xi)\neq 0\). Finally, a conjugacy separable group is residually finite, so \(\Bbbk[G]\) is RFD by~(i), and \(\Bbbk[G]\) is trace-RFD.
\end{proof}

\printbibliography

\end{document}